\begin{document}

\title{Data-driven Coordination of Distributed Energy Resources for Active Power Provision}

\author{Hanchen~Xu,~\IEEEmembership{Student Member,~IEEE,}
        Alejandro~D.~Dom\'{i}nguez-Garc\'{i}a,~\IEEEmembership{Member,~IEEE}
        and Peter~W.~Sauer,~\IEEEmembership{Life~Fellow,~IEEE}
\thanks{This work has been funded in part by the Department of Energy under a subcontract from the University of Tennessee Knoxville, the Siebel Energy Institute, and the Advanced Research Projects Agency-Energy (ARPA-E) within the NODES program, under Award DEAR0000695.}
\thanks{The authors are with the Department of Electrical and Computer Engineering at the University of Illinois at Urbana-Champaign, Urbana, IL 61801, USA. Email: \{hxu45, aledan, psauer\}@illinois.edu.}
}

\maketitle

\begin{abstract}
In this paper, we propose a framework for coordinating distributed energy resources (DERs) connected to a power distribution system, the model of which is not completely known, so that they collectively provide a specified amount of active power to the bulk power system, while respecting distribution line capacity limits.
The proposed framework consists of (i) a linear time-varying input-output (IO) system model that represents the relation between the  DER active power injections (inputs),  and the  total active power exchanged between the distribution and bulk power systems (output);   (ii) an estimator that aims to estimate the IO model parameters, and (iii) a controller that determines the optimal DER active power injections so the power exchanged between both systems equals to the specified amount at a minimum generating cost.

We formulate the estimation problem as a box-constrained quadratic program and solve it using the projected gradient descent algorithm.
To resolve the potential issue of collinearity in the measurements, we introduce random perturbations in the DER active power injections during the estimation process.
Using the estimated IO model, the optimal DER coordination problem to be solved by the controller can be formulated as a convex optimization problem, which can be solved easily.
The effectiveness of the framework is validated via numerical simulations using the IEEE 123-bus distribution test feeder.
\end{abstract}

\begin{IEEEkeywords}
data-driven, distributed energy resource, coordination, active power provision, projected gradient descent, parameter estimation.
\end{IEEEkeywords}

\section{Introduction} \label{sec:intro}

\IEEEPARstart{I}n the modernization that electric power systems are currently undergoing, one goal is to massively integrate distributed energy resources (DERs) into power distribution systems \cite{driesen2008design}.
These DERs, which include distributed generation resources, energy storage, demand response resources, and typically have  small capacities, may be coordinated so as to collectively provide grid support services, e.g., reactive power support for voltage control \cite{kulmala2014voltage, xu2018voltage, zhang2018dynamic}, and active power control for frequency regulation \cite{adg2010der, guggilam2017regulation, xu2018frequency}. 

In this paper, we focus on the problem of coordinating the response of a set of DERs in a lossy power distribution system so that they collectively provide some amount of active power to the bulk power system, e.g., in the form of demand response.
Specifically, the DERs will be requested  to collectively provide in real-time a certain amount of active power at the bus where the power distribution system is interconnected with the bulk power system.
In order for the DERs to fulfill such request, it is necessary to develop appropriate schemes that explicitly take into consideration the losses incurred. 
One way to include the losses in the problem formulation is to utilize a power-flow-like model of the power distribution system obtained offline. 
However, such a model may not be available or if so, it may not be accurate \cite{ardakanian2017identification, chen2014sensitivity, chen2016jacobian}.

As an alternative to the aforementioned model-based approach,  data-driven approaches have been demonstrated to be very effective in such situations where models are not readily available \cite{hou2011data, zhang2016sensitivity, chen2014sensitivity, chen2015sparse, chen2016jacobian, horn2016sced, lu2015wams, zhang2016damping}.
The fundamental idea behind data-driven approaches is to describe the system behavior by a linear time-varying (LTV) input-output (IO) model, and estimate the parameters of this model via regression using measurements of pertinent variables \cite{hou2011data, zhang2016sensitivity}.
Many previous works have applied  data-driven approaches to power system problems, both in a steady-state setting \cite{chen2014sensitivity, chen2015sparse, chen2016jacobian, horn2016sced}, and a dynamical setting \cite{lu2015wams, zhang2016damping}.
For example, in  \cite{chen2014sensitivity},  the authors developed a data-driven framework to estimate linear sensitivity distribution factors such as injection shifting factors  \cite{chen2015sparse}; they further proposed a data efficient sparse representation to estimate these sensitivities \cite{chen2015sparse}.
This framework was later tailored to the problem of estimating  the power flow Jacobian  \cite{chen2016jacobian}. 
In \cite{horn2016sced}, the authors used the estimation framework proposed in \cite{chen2014sensitivity}  to solve the security constrained economic dispatch problem.  
Data-driven approaches have also been applied successfully to develop power systems stabilizers \cite{lu2015wams}, and damping controls \cite{zhang2016damping}.
We refer interested readers to \cite{hou2017overview} for an overview of data-driven approaches and their applications in a variety of other areas. 

Yet, due to the collinearity in the measurements \cite{zhang2017sensitivity}, the regression problem may be ill-conditioned, thus resulting in large estimation errors \cite{zhang2017noise}.
Though numerical approaches such as locally weighted ridge regression \cite{zhang2017sensitivity} and noise-assisted ensemble regression \cite{zhang2017noise} can be used to mitigate the impacts of collinearity, there is no theoretical guarantee on the estimation errors.
In this paper, we pursue the data-driven approach to develop a framework for coordinating the response of a set of DERs.
The proposed framework consists of three components, namely (i) a model of the system describing the relation between the variables of interest to the problem, i.e., DER active power injections and power exchanged between the distribution and bulk power systems, (ii)  an estimator, which provides estimates of the parameters that populate the model in (i); and (iii)  a controller that uses the model in (i) with the  parameters estimated via (ii) to determine the active power injection set-points of the DERs.
Specifically,  an LTV IO model is adopted as the system model to capture the relation between the DER active power injections (inputs),  and the  total active   power exchange  (output).  

The parameters in this model are estimated by the estimator via the solution of a box-constrained quadratic program, obtained by using the projected gradient descent (PGD) algorithm. 
Inspired by ideas in power system model identification \cite{zhang2016mpm, zhang2017online}, we introduce random perturbations in the DER active power injections during the estimation process to resolve the potential issue of collinearity in the measurements used by the estimator.
We show that the estimation algorithm converges almost surely (a.s.) under some mild conditions, i.e., the estimated parameters converge to their true values, and the total provided active power also converges to the required amount.
Using the estimated IO model, the optimal DER coordination problem (ODCP) to be solved by the controller can be formulated as a convex optimization problem, which can be solved easily.
The major contributions of this paper are the data-driven coordination framework, the algorithm to solve the estimation problem, and its convergence analysis.

The remainder of this paper is  organized as follows.
Section~\ref{sec:prelim} describes the  power distribution system model and the ODCP of interest.
Section \ref{sec:framework} describes the components of the data-driven DER coordination framework. 
A description of the algorithm used in the framework, as well as its convergence analysis, is provided in Section \ref{sec:algo}.
The proposed framework is illustrated and validated via numerical simulations on the IEEE 123-bus test feeder in Section \ref{sec:simu}.
Concluding remarks are presented in Section \ref{sec:con}.

\section{Preliminaries} \label{sec:prelim}

In this section, we introduce the power distribution system model adopted in this work.
We then discuss the  DER coordination problem of interest.

\subsection{Power Distribution System Model} \label{sec:model}

Consider a power distribution system represented by a directed graph that consists of a set of buses indexed by the elements in the set $\calN = \{0, 1, \cdots, N\}$, and a set of distribution  lines indexed by the elements in the set $\calL = \{1, 2, \cdots, L\}$. 
Each line $\ell \in \calL$ is associated with a tuple $(i, j) \in \calN \times \calN$, where $i$ is the sending end and $j$ is the receiving end of a line, with the direction from $i$ to $j$ defined to be positive.
Assume bus $0$ corresponds to a substation bus, which is the only connection of the power distribution system to the  bulk power system. 
Further, assume that bus~$0$ is an infinite bus that maintains a constant voltage magnitude.
Without loss of generality, assume there is at most one DER and/or load at each bus, except bus $0$, which does not have any DER or load.
Let $\calN^g = \{1, \cdots, n\}$ denote the DER index set.
Throughout this paper, DERs are assumed to be controllable. 
Uncontrollable DERs are modeled as negative loads.

Let $p_i^d$ and $q_i^d$ respectively denote the active and reactive power loads at bus $i$, $i \in \calN$, and define $\bm{p}^d = [p_1^d, \cdots, p_N^d]$, and $\bm{q}^d = [q_1^d, \cdots, q_N^d]^\top$.
Let $p_i^g$ and $q_i^g$ respectively denote the active and reactive power injections from DER $i$, $i \in \calN^g$, and define $\bm{p}^g = [p_1^g, \cdots, p_n^g]^\top$, and $\bm{q}^g = [q_1^g, \cdots, q_n^g]^\top$.
Let $\underline{p}_i^g$ and $\overline{p}_i^g$ respectively denote the minimum and  maximum active  power that can be provided by DER~$i$, $i \in \calN^g$, and define $\bm{\underline{p}}^g = [\underline{p}_1^g, \cdots, \underline{p}_n^g]^\top$, and $\bm{\overline{p}}^g = [\overline{p}_1^g, \cdots, \overline{p}_n^g]^\top$. 
Similarly, let $\underline{q}_i^g$ and $\overline{q}_i^g$ respectively denote the minimum and maximum reactive power that can be provided by DER~$i$, $i \in \calN^g$, and define $\bm{\underline{q}}^g = [\underline{q}_1^g, \cdots, \underline{q}_n^g]^\top$, and $\bm{\overline{q}}^g = [\overline{q}_1^g, \cdots, \overline{q}_n^g]^\top$.
Let $p_i$ denote the active power injection at bus $i\in \calN$, and define $\bm{p} = [p_1, \cdots, p_N]^\top$; then
\begin{align}
	\bm{p} = \bm{C} \bm{p}^g - \bm{p}^d,
\end{align}
where $\bm{C} \in \bbR^{N \times n}$ is the matrix that maps the DER indices to the buses.
The entry at the $i^{\text{th}}$ row, $j^{\text{th}}$ column of $\bm{C}$ is $1$ if DER $j$ is at bus $i$. 

Let $\tilde{\bm{M}}=[M_{i \ell}] \in \real^{(N+1) \times L}$, with $M_{i \ell} = 1$ and $M_{j \ell} = -1$ if line $\ell$ starts from bus $i$ and ends at bus $j$, and all other entries equal to zero.
Let $\bm{M}$ denote the $(N\times L)$-dimensional matrix that results from removing the first row in $\tilde{\bm{M}}$.
We first make the following assumption:
\begin{assumption} \label{a1}
The power distribution system is radial.
\end{assumption}
Under Assumption~\ref{a1}, $L=N$, and $\bm{M}$ is invertible.
Let $f_\ell$ denote the active power that flows from the sending end to the receiving end of line $\ell \in \calL$, and define $\bm{f} = [f_1, \cdots, f_L]^\top$. 
Let  $\overline{f}_\ell$ denote maximum power flows on line $\ell \in \calL$, and define $\overline{\bm{f}} = [\overline{f}_1, \cdots, \overline{f}_L]^\top$.
Then $\bm{f}$ can be approximately computed from $\bm{p}$ as follows:
\begin{align} \label{eq:line_flow}
	\bm{f} \approx \bm{M}^{-1} \bm{p} = \bm{M}^{-1} (\bm{C} \bm{p}^g - \bm{p}^d).
\end{align}
We would like to point out that Assumption~\ref{a1} allows us to approximately compute the active power flow on the distribution lines.
It is, however, not a necessary condition for the analysis to be done in the rest of the paper.

Let $y$ denote the active power exchanged between the distribution and bulk power systems via bus~$0$, defined to be positive if the flow is from the substation to the bulk power system.
Conceptually, $y$ can be represented as a function of $\bm{p}^g$, $\bm{q}^g$, $\bm{p}^d$, $\bm{q}^d$.
Note also $\bm{q}^g$ is typically set according to some specific reactive power control rules to achieve certain objectives such as constant voltage magnitude or constant power factor \cite{xu2018frequency}, and thus is a function of $\bm{p}^g, \bm{p}^d, \bm{q}^d$.
Then, $y$ can be written as a function of $\bm{p}^g, \bm{p}^d, \bm{q}^d$ as follows:
\begin{align} \label{eq:f}
    y = h(\bm{p}^g, \bm{p}^d, \bm{q}^d),
\end{align}
where $h(\cdot)$ captures the impacts from both the physical laws as well as reactive power control.
We emphasize that although the voltage control problem is not explicitly modeled in this paper, we assume certain voltage control schemes exist in the power distribution system such that the voltage profile will be maintained within an acceptable range.
Indeed, voltage control schemes may have a significant impact on $h$. 
We refer interested readers to \cite{xu2018frequency} for a detailed analysis on the impacts of voltage control schemes.

Note that the explicit form of $h$ is difficult to obtain; however, we can make the following assumption on $h$:
\begin{assumption} \label{a2}
	The function $h$ is differentiable and its first order partial derivatives with respect to $\bm{p}^g$ belong to $[\underline{b}_1, \overline{b}_1]$, where $\underline{b}_1$, $\overline{b}_1$ are some known constants.
	In addition, $\frac{\partial h}{\partial \bm{p}^g}$ is a Lipschitz function, i.e., there exists $b_2 > 0$ such that
	\[
		\Norm{\left.\frac{\partial h}{\partial \bm{p}^g}\right\vert_{\bm{a}} - \left.\frac{\partial h}{\partial \bm{p}^g}\right\vert_{\bm{b}}} \leq b_2 \norm{\bm{a} - \bm{b}},
	\]
	where $\bm{a}, \bm{b} \in [\underline{\bm{p}}^g, \overline{\bm{p}}^g]$, and $\Norm{\cdot}$ denotes the $L_2$-norm.
\end{assumption}

Assumption~\ref{a2} implies that, for fixed loads,  the rate of change in $y$ is bounded for bounded changes in the DER   active power injections. 
In addition, the total active power provided to the bulk power system will increase when more active power is injected in the power distribution system.
This assumption holds when the system is at a normal operating condition without line congestions.

\subsection{Optimal DER Coordination Problem}

The DERs in the distribution system can collectively provide active power to the bulk power system as quantified by the power exchange  between both systems at the substation bus.
For example, the DERs can provide demand response services or frequency regulation services to the bulk power system; in both cases, the DERs need to be coordinated in such a way that the total active power provided to the bulk power system, $y$, tracks some pre-specified value, denoted by $y^\star$.
The objective of the ODCP is to determine the DER active power injections, $\bm{p}^g$, that  minimize some cost function, e.g., one that reflects the cost of active power provision, while respecting to the following constraints:
\begin{itemize} 
\item[\textbf{[C1.]}] the active power exchanged between the distribution and bulk power systems via bus~$0$, $y$, tracks some pre-specified value $y^\star$;
\item[\textbf{[C2.]}] the active power injection from each DER $i$, $i \in \calN^g$, does not exceed its corresponding capacity limits, i.e., $\bm{\underline{p}}^g \leq~\bm{p}^g \leq \bm{\overline{p}}^g$;  
\item[\textbf{[C3.]}] the power flow on each line $\ell$, $\ell \in \calL$, does not exceed its maximum capacity, i.e., $-\overline{\bm{f}} \leq \bm{f} \leq \overline{\bm{f}}$.
\end{itemize}
Note that while constraint \textbf{C2} is a hard constraint that cannot be violated, constraint \textbf{C3} may be allowed to be violated slightly for a short period.
The ODCP can be formulated as the following optimization problem:
\begin{align*}
\minimize_{\bm{p}^g \in [\underline{\bm{p}}^g, \overline{\bm{p}}^g]} ~ c(\bm{p}^g),
\end{align*}
subject to 
\begin{subequations} \label{eq:ODCP}
\begin{gather}
	h(\bm{p}^g, \bm{p}^d, \bm{q}^d) = y^\star, \label{eq:ODCP_c1} \\
    -\overline{\bm{f}} \leq \bm{M}^{-1} (\bm{C} \bm{p}^g - \bm{p}^d) \leq \overline{\bm{f}},
\end{gather}
\end{subequations}
where $c(\cdot)$ denotes the cost function of the active power injections.
This problem is difficult, however, when the model describing the power exchange with the bulk power system, as captured by $h$, is unknown.
In this paper, we will resort to a data-driven approach to tackle this problem.

\section{DER Coordination Framework} \label{sec:framework}

In this section, we describe the building blocks of the proposed framework; namely an LTV IO model, an estimator, and a controller.

\subsection{Overview}

\begin{figure}[!t]
\centering
\includegraphics[width=3.5in]{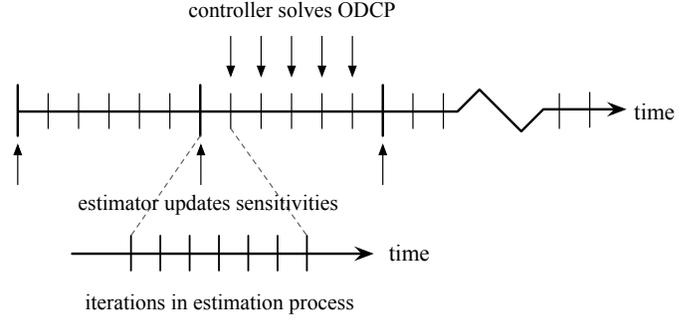}
\caption{Timescale separation of actions taken.}
\label{fig:timeline}
\end{figure}

The DER coordination framework consists of three components, namely (i) an LTC model of the system describing the relation between $y$ and $\bm{u}$, (ii) an estimator that provides estimates of the parameters---the so-called sensitivity vector---that populate the model in (i); and (iii)  a controller that uses the model in (i) with the  parameters estimated via (ii) to solve the ODCP.
This framework works on two timescales---a slow one and a fast one, as illustrated in Fig. \ref{fig:timeline}.
On the slow timescale, the controller determines the DER active power injection set-points by solving the ODCP periodically.
The sensitivity vector is also updated by the estimator periodically.
However, during each update of the sensitivity vector, the estimator needs to take actions in several iterations on a fast timescale.
Since the sensitivity vector may not change significantly in a short time, it is used in the ODCP for several time instants before it is updated again.
Before we proceed to presenting the detailed components in the proposed framework, we make the following assumption:
\begin{assumption} \label{a3}
	$\bm{p}^d$ and $\bm{q}^d$ are constant during the estimation process; therefore, changes in $y$ that occur across iterations in the estimation process depend only on changes in $\bm{p}^g$.
\end{assumption}
\begin{remark}
	Assumption \ref{a3} allows us to determine the impacts of the DER active power injections on the output. When the load variability is significant enough so that it cannot be neglected during the estimation process, it becomes necessary to measure the loads and determine their impacts on the output as well. This is beyond the scope of the paper, and therefore we will leave it as future work.
\end{remark}

\subsection{Input-Output System Model}

For notational simplicity in the later development, define $\bm{u} = \bm{p}^g$, $\underline{\bm{u}} = \underline{\bm{p}}^g$, $\overline{\bm{u}} = \overline{\bm{p}}^g$, and $\bm{\pi} = [(\bm{p}^d)^\top, (\bm{q}^d)^\top]^\top$; then, \eqref{eq:f} can be written as:
\begin{align} \label{eq:f2}
    y = h(\bm{u}, \bm{\pi}).
\end{align}

Unless otherwise noted, throughout this paper,  $x[k]$  denotes the value that some variable $x$ takes at iteration $k$.
It follows from \eqref{eq:f2} and Assumption \ref{a2} that $y[k-1] = h(\bm{u}[k-1], \bm{\pi})$ and $y[k] = h(\bm{u}[k], \bm{\pi})$.
Then, by the Mean Value Theorem, there exists $a_k \in [0, 1]$ and $\bm{\tilde{u}}[k] = a_k \bm{u}[k] + (1-a_k) \bm{u}[k-1]$ such that 
\begin{align*}
	y[k] - y[k-1] & = h(\bm{u}[k], \bm{\pi}) - h(\bm{u}[k-1], \bm{\pi}) \\
	& = \bm{\phi}[k]^\top (\bm{u}[k] - \bm{u}[k-1]),
\end{align*}
where $\bm{\phi}[k]^\top = [\phi_1[k], \cdots, \phi_n[k]] = \left.\dfrac{\partial h}{\partial \bm{u}}\right\vert_{\bm{\tilde{u}}[k]}$,\footnote{We adopt the convention that the partial derivative of a scalar function with respect to a vector is a row vector.} is referred to as the  sensitivity vector at iteration $k$.
It follows from Assumption~\ref{a3} that $\phi_i[k] \in [\underline{b}_1, \overline{b}_1]$, $i = 1, \cdots, n$.
Therefore, at any iteration $k$, \eqref{eq:f2} can be transformed into the following equivalent LTV IO model:
\begin{align} \label{eq:io_model}
	y[k] = y[k-1] + \bm{\phi}[k]^\top (\bm{u}[k] - \bm{u}[k-1]).
\end{align}

\subsection{Estimator On Fast Timescale}

As illustrated in Fig. \ref{fig:timeline}, the estimator updates the sensitivity vector across several iterations on the fast timescale.
At iteration $k$, the objective of the estimator is to obtain an estimate of $\bm{\phi}[k]$, denoted by $\bm{\hat{\phi}}[k]$,  using measurements collected up to iteration $k$, i.e., $y[k-1], \bm{u}[k-1], y[k-2], \bm{u}[k-2], \cdots$; we formulate this estimation problem as follows:
\begin{align*} 
\hat{\bm{\phi}}[k] = \argmin_{\hat{\bm{\phi}} \in \calB} ~  J^e(\hat{\bm{\phi}}) = \frac{1}{2} (y[k-1] - \hat{y}[k-1])^2,
\end{align*}
subject to 
\begin{align} \label{eq:estimator}
	\hat{y}[k-1] = y[k-2] + \hat{\bm{\phi}}^\top (\bm{u}[k-1] - \bm{u}[k-2]),
\end{align}
where $\calB = [\underline{b}_1, \overline{b}_1]^n$, $J^e(\cdot)$ is the cost function of the estimator, and $\hat{y}[k-1]$ is the value of $y[k-1]$ estimated by the IO model at iteration $k$.
Essentially, \eqref{eq:estimator} aims to find $\hat{\bm{\phi}}$ that minimizes the squared error between the estimated value and the true value of $y$.
Then, $\hat{\bm{\phi}}[k]$ is used in the controller to determine the control for the next time instant.

During the estimation process, it is still necessary to track the output target.
Therefore, at each iteration, the control is set based on the solution to the following problem:
\begin{align*} 
\bm{u}[k] = \argmin_{\bm{u} \in \mathcal{U}} ~  J^c(\bm{u}) = \frac{1}{2} (y^\star - \hat{y}[k])^2,
\end{align*}
subject to
\begin{align} \label{eq:controller}
	\hat{y}[k] = y[k-1] + \hat{\bm{\phi}}[k]^\top (\bm{u} - \bm{u}[k-1]),
\end{align}
where $\mathcal{U} = [\underline{\bm{u}}, \overline{\bm{u}}]$, and $J^c(\cdot)$ is the cost function.
Note that $\hat{\bm{\phi}}[k]$ is used in \eqref{eq:controller} to predict the value of $y[k]$ for a given $\bm{u}$.
Different from the ODCP, the objective of the controller during the estimation process is that the output tracks the target and there may exist multiple solutions to this problem.
This objective is chosen such that the DER active power injections behave in a way that can improve the estimation accuracy, as will be shown later in Section \ref{sec:algo}.

\subsection{Controller On Slow Timescale}

As illustrated in Fig. \ref{fig:timeline}, the controller solves the ODCP to determine the least-cost active power set-points for DERs on the slow timescale. Meanwhile, it also forces the DERs to inject random active power perturbations at each iteration on the fast timescale.
The ODCP to be solved by the controller is as follows:
\begin{align*} 
\minimize_{\bm{p}^g \in [\underline{\bm{p}}^g, \overline{\bm{p}}^g]} ~ c(\bm{p}^g),
\end{align*}
subject to 
\begin{subequations} \label{eq:ODCP2}
\begin{gather}
    y + \hat{\bm{\phi}}^\top (\bm{p}^g - \tilde{\bm{p}}^g) = y^\star, \\
	-\overline{\bm{f}} \leq \bm{M}^{-1} (\bm{C} \bm{p}^g - \bm{p}^d) \leq \overline{\bm{f}},
\end{gather}
\end{subequations}
where $y$ is the output and $\tilde{\bm{p}}^g$ is the DER active power injection vector at the beginning of the current time instant, and $\hat{\bm{\phi}}$ is the up-to-date sensitivity vector.
When $c$ is a convex function, \eqref{eq:ODCP2} is a convex problem and therefore can be solved easily with convergence guarantees.
Note that this formulation is obtained by replacing \eqref{eq:ODCP_c1} in \eqref{eq:ODCP} by the estimated IO model.

\section{Estimation Algorithm and Its Convergence} \label{sec:algo}

The ODCP can be solved using existing algorithms for convex optimization and thus we do not discuss it in more details here.
In this section, we focus on the problem faced by the estimator and propose a PGD based algorithm to solve it.
We then provide convergence results for the proposed algorithm.

\subsection{Estimation Algorithm}

We first describe the basic workflow of the proposed algorithm.
Each iteration consists of an estimation step and a control step.
At the beginning of iteration $k$, $y[k-1]$ is available to the estimator, which uses it to update the estimate of the sensitivity vector.
The updated estimate of the sensitivity vector, $\hat{\bm{\phi}}[k]$, is then used in the controller to determine the control, $\bm{u}[k]$.
Then, the DERs are instructed to change their active power injection set-points based on $\bm{u}[k]$.
At time instant $k+1$, the estimation and control iterations are repeated once $y[k]$ becomes available.
The sequential process described above, which happens on the fast timescale, is illustrated as follows:
\begin{align*}
	\cdots  \bm{u}[k-1] \rightarrow \underbrace{\bm{y}[k-1] \rightarrow \hat{\bm{\phi}}}_{\text{estimation step}} \overbrace{[k] \rightarrow \bm{u}[k]}^{\text{control step}} \rightarrow \bm{y}[k] \rightarrow \hat{\bm{\phi}}[k+1]   \cdots 
\end{align*}

Problems \eqref{eq:estimator} and \eqref{eq:controller} can be solved using the PGD method (see, e.g., \cite{iusem2003convergence}).
Let $\proj_{\mathbb{V}_1 \rightarrow \mathbb{V}_2}$ denote the projection operator from a vector space $\mathbb{V}_1$ to its (arbitrary) subspace $\mathbb{V}_2$, i.e.,
\begin{align*}
	\proj_{\mathbb{V}_1 \rightarrow \mathbb{V}_2}(\bm{v}_1) = \argmin_{\bm{v}_2 \in \mathbb{V}_2} \norm{\bm{v}_2 - \bm{v}_1},
\end{align*}
where $\bm{v}_1 \in \mathbb{V}_1$. 
For ease of notation, when the vector space to which $\bm{v}_1$ belongs is unambiguous, we simply write $\proj_{\mathbb{V}_2}(\bm{v}_1)$ instead of $\proj_{\mathbb{V}_1 \rightarrow \mathbb{V}_2}(\bm{v}_1)$.

Define the tracking error at iteration $k$ as $e[k] = y[k] - y^\star$.
In addition, define $\Delta y[k] = y[k] - y[k-1]$ and $\Delta \bm{u}[k] = \bm{u}[k] - \bm{u}[k-1]$.
The partial derivative vector of $J^e(\hat{\bm{\phi}})$ with respect to $\hat{\bm{\phi}}$ is
\begin{align} \label{eq:derv_e}
	\dfrac{\partial J^e(\hat{\bm{\phi}})}{\partial \hat{\bm{\phi}}} = \Delta \bm{u}[k-1] (\Delta \bm{u}[k-1]^\top \hat{\bm{\phi}} - \Delta y[k-1]),
\end{align}
and that of $J^c(\bm{u})$ with respect to $\bm{u}$ is
\begin{align} \label{eq:derv_c}
	\frac{\partial J^c(\bm{u})}{\partial \bm{u}} = \hat{\bm{\phi}}[k] (\hat{\bm{\phi}}[k]^\top (\bm{u} - \bm{u}[k-1]) + e[k-1]).
\end{align}

Instead of solving both \eqref{eq:estimator} and \eqref{eq:controller} to completion, we iterate the PGD algorithm that would solve them for one step at each iteration.
Specifically, at iteration $k$, we evaluate the new gradient at $\hat{\bm{\phi}}[k-1]$  and $\bm{u}[k-1]$ and iterate once.
Thus, by using \eqref{eq:derv_e} and \eqref{eq:derv_c}, the update rules for $\hat{\bm{\phi}}$ and $\bm{u}$, respectively, are
\begin{align} \label{eq:estimation}
		\hat{\bm{\phi}}[k] = & \proj_{\calB}\left( \hat{\bm{\phi}}[k-1] - \alpha_k \Delta \bm{u}[k-1] \right. \nonumber \\
		 & \left. (\Delta \bm{u}[k-1]^\top \hat{\bm{\phi}}[k-1] - \Delta y[k-1] ) \right),
\end{align}
\begin{align} \label{eq:control0}
	\bm{u}[k] = \proj_{\mathcal{U}}\left( \bm{u}[k-1] - \beta_k e[k-1] \hat{\bm{\phi}}[k] \right),
\end{align}
where $\alpha_k > 0$ and $\beta_k > 0$ are the estimation and control step sizes at iteration $k$.

\begin{algorithm}[!t]
    \SetAlgoLined
    \DontPrintSemicolon
    \KwData{\\~~~~~~$y$: output \\~~~~~~$y^\star$: output tracking target \\~~~~~~$\delta$: maximum allowed tracking error \\~~~~~~$\hat{\bm{\phi}}^0$: initial estimate of sensitivity vector \\~~~~~~$\bm{u}^0$: initial DER active power injection set-point}
    \KwResult{\\~~~~~~$\bm{u}$: DER active power injection set-point \\~~~~~~$\hat{\bm{\phi}}$: estimate of sensitivity vector}
    \textbf{Initialization}: set $\hat{\bm{\phi}}[0] = \hat{\bm{\phi}}^0$, $\bm{u}[-1] = \bm{u}[0] = \bm{u}^0$, obtain measurement of $y[-1]$, set $k = 1$\;
    \While{$|e[k]| > \delta$}{
        obtain new measurement of $y[k-1]$\;
        compute $e[k-1] = y[k-1] - y^\star$\;
        compute $\Delta y[k-1] = y[k-1] - y[k-2]$\;
        compute $\Delta \bm{u}[k-1] = \bm{u}[k-1] - \bm{u}[k-2]$\;
        update the sensitivity vector estimate according to
        \begin{align*}
				\hat{\bm{\phi}}[k] = & \proj_{\calB}\left( \hat{\bm{\phi}}[k-1] - \alpha_k \Delta \bm{u}[k-1] \right. \\
				& \left. (\Delta \bm{u}[k-1]^\top \hat{\bm{\phi}}[k-1] - \Delta y[k-1] ) \right)
		\end{align*}\; \vspace{-0.2in}
		update the control vector, $\bm{u}$, according to
		\begin{align*}
			\bm{u}[k] = \proj_{\mathcal{U}}\left( \bm{u}[k-1] - \beta_k e[k-1] \bm{W}[k] \hat{\bm{\phi}}[k] \right)
		\end{align*}\; \vspace{-0.2in}
		change DER active power injections to $\bm{u}[k]$\;
		set  $k = k + 1$\;
    }
\label{algo:DER_coordination}
\caption{Estimation Algorithm}
\end{algorithm}

In order to resolve the potential issue of collinearity in the measurements used by the estimator, we introduce random perturbations during the estimation process.
Define $\bm{W}[k] = \diag{w_1[k], \dots, w_n[k]}$, where $w_i[k]$'s are independent random variables that follow a Bernoulli distribution with a probability parameter of $0.5$.
Then, the control update rule in \eqref{eq:control0} is modified, resulting in:
\begin{align} \label{eq:control}
	\bm{u}[k] = \proj_{\mathcal{U}}\left( \bm{u}[k-1] - \beta_k e[k-1] \bm{W}[k] \hat{\bm{\phi}}[k] \right).
\end{align}
Intuitively, this means that at each iteration, the control of each DER is updated with a probability of $0.5$. 
The random perturbation in the control is key to establish convergence of the parameter estimation process.
The estimation algorithm, along with its initialization, is summarized in Algorithm \ref{algo:DER_coordination}, where $\bm{u}^0$ is the vector of DER active power injections at the beginning of the time instant at which the estimation starts and $\hat{\bm{\phi}}^0$ is the up-to-date estimate of the sensitivity vector at the beginning of the same time instant.

\subsection{Convergence Analysis}

The main convergence result for the control step during the estimation process is stated next.
\begin{theorem} \label{thm1}
	Using the estimation update rule in \eqref{eq:estimation} and the  control update rule in \eqref{eq:control} with $\beta_k \in (\frac{\epsilon}{\underline{b}_1^2}, \frac{1}{n \overline{b}_1^2})$, where $0 < \epsilon < \frac{\underline{b}_1^2}{n \overline{b}_1^2}$ is a given parameter, the system attains one of the following equilibria: 1) $e[k]$ converges to $0$ a.s.; 2) $e[k]$ converges to some positive constant and $\bm{u}[k]$ stays at $\underline{\bm{u}}$; 3) $e[k]$ converges to some negative constant and $\bm{u}[k]$ stays at $\overline{\bm{u}}$. In all cases, $\lim_{k \to \infty} \Delta \bm{u}[k] = \zeros_n$, where $\zeros_n \in \bbR^n$ is an all-zeros vector.
\end{theorem}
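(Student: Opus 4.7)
The plan is to use $V[k] = e[k]^2$ as a Lyapunov function and then perform a case analysis on its pathwise limit. First I would write the box-projected control update component-wise as $\Delta u_i[k] = \lambda_i[k] v_i[k]$ with $\lambda_i[k] \in [0,1]$ and $v_i[k] = -\beta_k e[k-1] w_i[k] \hat{\phi}_i[k]$. Substituting into the IO identity $e[k] = e[k-1] + \bm{\phi}[k]^\top \Delta \bm{u}[k]$ yields the scalar recursion
\[
e[k] = e[k-1]\bigl(1 - \beta_k T[k]\bigr), \quad T[k] := \sum_{i=1}^n \phi_i[k]\,\hat{\phi}_i[k]\,\lambda_i[k]\,w_i[k].
\]
Since $\phi_i[k], \hat{\phi}_i[k] \in [\underline{b}_1,\overline{b}_1]$ and $\lambda_i[k] w_i[k] \in [0,1]$, we get $T[k] \in [0, n\overline{b}_1^2]$, so the upper bound $\beta_k < 1/(n\overline{b}_1^2)$ forces $1 - \beta_k T[k] \in (0,1]$. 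Hence $|e[k]|$ is pathwise non-increasing with $\mathrm{sign}(e[k]) = \mathrm{sign}(e[0])$, and $V[k]$ converges pathwise to a (random) limit $V^\star \ge 0$. Telescoping the drop $V[k-1] - V[k] \ge e[k-1]^2 \beta_k T[k]$ also yields $\sum_k e[k-1]^2 \beta_k T[k] \le V[0] < \infty$.

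Next I would split on $V^\star$. If $V^\star = 0$, then $e[k] \to 0$, and the bound $|\Delta u_i[k]| \le \beta_k |e[k-1]| \overline{b}_1$ immediately gives $\Delta \bm{u}[k] \to \zeros_n$, yielding case~1. For $V^\star > 0$, sign-preservation lets me treat $e[0] > 0$ (the opposite sign is symmetric, giving case~3). Here $e[k] \to e^\star := \sqrt{V^\star} > 0$ and the monotone convergence gives $e[k-1] \ge e^\star$ for all $k$; plugging into the summability bound and using the lower bound $\beta_k > \epsilon/\underline{b}_1^2$ produces $\sum_k T[k] < \infty$, hence $T[k] \to 0$ pathwise. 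Because $T[k] \ge \underline{b}_1^2 \sum_i \lambda_i[k] w_i[k]$, every $\lambda_i[k] w_i[k] \to 0$ as well.

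To finish case~2, I would translate $\lambda_i[k] w_i[k] \to 0$ into $\bm{u}[k] = \underline{\bm{u}}$ eventually. Fix coordinate $i$: since $v_i[k] \le 0$ under $e[k-1] > 0$, $u_i[k]$ is monotonically non-increasing and bounded below by $\underline{u}_i$, so it converges. Whenever $w_i[k] = 1$ and the projection is inactive, $u_i$ decreases by $|v_i[k]| \ge (\epsilon/\underline{b}_1^2)\,e^\star \underline{b}_1 = \epsilon e^\star/\underline{b}_1 > 0$; since the total decrease of $u_i[k]$ is finite, such ``free'' steps occur only finitely often. After the last free step, any $w_i[k] = 1$ activates projection and sets $u_i[k] = \underline{u}_i$, and Borel--Cantelli on the i.i.d.\ Bernoulli$(\tfrac12)$ sequence $\{w_i[k]\}$ guarantees $w_i[k] = 1$ infinitely often a.s., so $u_i[k] = \underline{u}_i$ for all sufficiently large $k$ a.s. Joining across coordinates yields $\bm{u}[k] = \underline{\bm{u}}$ and $\Delta \bm{u}[k] = \zeros_n$ eventually, establishing case~2; case~3 is symmetric.

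The main obstacle I anticipate is this last translation from $T[k] \to 0$ to a finite-time boundary hit: one has to couple the deterministic monotone-convergence structure of $u_i[k]$ with the a.s.\ infinitely-often guarantee from Borel--Cantelli, which is precisely where the uniform lower bound $\beta_k > \epsilon/\underline{b}_1^2$ enters quantitatively. A secondary but minor concern is that $\hat{\bm{\phi}}[k]$ is coupled to the control via the joint recursion; however, because the estimation step projects onto $\calB$, the sequence $\hat{\bm{\phi}}[k]$ appears in the analysis only through its uniform two-sided bounds, so it can be handled as an exogenous bounded sequence throughout.
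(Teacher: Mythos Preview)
Your argument is correct and takes a genuinely different route from the paper. The paper writes the tracking error multiplicatively as $e[k]=e[0]\prod_{i=1}^k \rho_i$ with $\rho_k = 1-\beta_k\bm{\phi}[k]^\top\bm{W}[k]\bar{\bm{\phi}}[k]$, splits cases according to whether $\bm{u}[k]$ ever equals $\underline{\bm{u}}$ or $\overline{\bm{u}}$ at some finite iteration, and in the interior case bounds $\rho_k\le\overline{\rho}_k=1-\epsilon w_{i}[k]$ and appeals to a Strong-Law-of-Large-Numbers lemma (their Lemma~2) to force the random product to zero. Your approach instead uses the Lyapunov function $V[k]=e[k]^2$, telescopes to obtain $\sum_k e[k-1]^2\beta_k T[k]<\infty$, and splits on the pathwise limit $V^\star$; when $V^\star>0$ you recover $\sum_k T[k]<\infty$ and then use coordinate-wise monotonicity of $u_i[k]$, the uniform step lower bound from $\beta_k>\epsilon/\underline{b}_1^2$, and second Borel--Cantelli on $\{w_i[k]=1\}$ to show $u_i[k]$ hits $\underline{u}_i$ in finite time. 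The paper's route is shorter in the interior case but relies on an auxiliary random-product lemma and a somewhat delicate ``there exists $i$'' selection inside the SLLN step; your route is more elementary (no SLLN, only Borel--Cantelli and deterministic monotone convergence), makes the role of the lower bound on $\beta_k$ fully explicit, and directly establishes that in the boundary equilibria the control actually \emph{equals} the bound after finitely many steps rather than merely converging to it. Your observation that $\hat{\bm{\phi}}[k]$ enters only through the box $[\underline{b}_1,\overline{b}_1]^n$ is exactly how the paper uses it as well.
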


Theorem \ref{thm1} shows something intuitive, i.e.,  the tracking error will be positive (negative) if the   requested active power is less (more) than the minimum (maximum) amount of active power the DERs can provide; otherwise, the tracking error goes to zero a.s.

We note that $\epsilon$ has a direct impact on the convergence rate of the control algorithm. This is more obvious in a deterministic setting, when the control update rule in \eqref{eq:control0} is used instead of the one in \eqref{eq:control}.
A result on the convergence rate is given in the following corollary.
\begin{corollary} \label{corollary:1}
	Assume $\bm{u}[k] \neq \underline{\bm{u}}$ and $\bm{u}[k] \neq \overline{\bm{u}}$, $\forall k \in \bbN$. 
	Using the estimation update rule in \eqref{eq:estimation} and  the control update rule in \eqref{eq:control0} with $\beta_k \in (\frac{\epsilon}{\underline{b}_1^2}, \frac{1}{n \overline{b}_1^2})$, where $\epsilon > 0$ is a given parameter, $e[k]$ converges to $0$ at a rate smaller that $1-\epsilon$, i.e., $\left|\frac{e[k]}{e[k-1]}\right| < 1- \epsilon$.
\end{corollary}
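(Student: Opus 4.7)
The plan is to exploit the assumption that $\bm{u}[k]$ never hits the box boundaries, which makes the projection operator $\proj_{\mathcal{U}}$ inactive at every iteration. Under this assumption the deterministic control update \eqref{eq:control0} collapses to the affine recursion $\Delta\bm{u}[k] = -\beta_k\, e[k-1]\,\hat{\bm{\phi}}[k]$. Combined with the exact IO model \eqref{eq:io_model}, which is a consequence of the mean value theorem applied along the segment between $\bm{u}[k-1]$ and $\bm{u}[k]$, the tracking error evolves as
\begin{align*}
    e[k] = e[k-1] + \bm{\phi}[k]^\top \Delta\bm{u}[k] = \bigl(1 - \beta_k\, \bm{\phi}[k]^\top \hat{\bm{\phi}}[k]\bigr)\, e[k-1].
\end{align*}
So proving the corollary reduces to showing that the scalar multiplier $1 - \beta_k\, \bm{\phi}[k]^\top \hat{\bm{\phi}}[k]$ has absolute value strictly below $1-\epsilon$ at every iteration, uniformly in $k$.

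The next step is to bound the inner product $\bm{\phi}[k]^\top \hat{\bm{\phi}}[k]$ entrywise. By Assumption~\ref{a2} each true sensitivity $\phi_i[k]$ lies in $[\underline{b}_1, \overline{b}_1]$; and since $\hat{\bm{\phi}}[k]$ is produced by $\proj_{\calB}$ with $\calB = [\underline{b}_1,\overline{b}_1]^n$, each estimated sensitivity $\hat{\phi}_i[k]$ lies in the same interval. Taking $\underline{b}_1 > 0$ (which is implicit in the monotone response assumption of Section~\ref{sec:model}), every term of the sum $\sum_i \phi_i[k] \hat{\phi}_i[k]$ is positive and lies in $[\underline{b}_1^2, \overline{b}_1^2]$, so
\begin{align*}
    n\,\underline{b}_1^2 \;\le\; \bm{\phi}[k]^\top \hat{\bm{\phi}}[k] \;\le\; n\,\overline{b}_1^2.
\end{align*}
Multiplying by the step-size range $\beta_k \in \bigl(\tfrac{\epsilon}{\underline{b}_1^2}, \tfrac{1}{n\overline{b}_1^2}\bigr)$ yields $\beta_k\, \bm{\phi}[k]^\top \hat{\bm{\phi}}[k] \in (n\epsilon,\, 1)$, and therefore $1 - \beta_k\, \bm{\phi}[k]^\top \hat{\bm{\phi}}[k] \in (0,\, 1-n\epsilon)$. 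Since $n\ge 1$ we have $1-n\epsilon \le 1-\epsilon$, so $\bigl|e[k]/e[k-1]\bigr| < 1-\epsilon$, which is the claim.

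There is no single dominant obstacle here; this is essentially a bookkeeping exercise once the projection is removed. The subtle points to be careful about are three. First, the interior assumption is what allows the recursion to be linear in $e[k-1]$; otherwise the projection onto $\mathcal{U}$ would clip the update and break the equality. Second, one must invoke Assumption~\ref{a2} with $\underline{b}_1 > 0$ to guarantee that every product $\phi_i[k]\hat{\phi}_i[k]$ is positive, so the lower bound $n\underline{b}_1^2$ survives summation without cancellation. Third, the box projection for $\hat{\bm{\phi}}[k]$ is what lets us treat the estimate as a bounded object even before establishing convergence of the estimator itself, which is important because Corollary~\ref{corollary:1} makes no claim on how accurate $\hat{\bm{\phi}}[k]$ is---only that it is a feasible point of $\calB$. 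Once these three points are made explicit, the bound $|e[k]/e[k-1]| < 1-\epsilon$ follows from the algebra above.
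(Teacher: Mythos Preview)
Your argument has a genuine gap in its very first step. The hypothesis of the corollary is that $\bm{u}[k] \neq \underline{\bm{u}}$ and $\bm{u}[k] \neq \overline{\bm{u}}$ as \emph{vectors}; this is strictly weaker than $\bm{u}[k] \in (\underline{\bm{u}}, \overline{\bm{u}})$. It only guarantees that at least one coordinate of $\bm{u}[k]$ is strictly inside its bounds, while other coordinates may well sit on the boundary of $\mathcal{U}$. Consequently the projection $\proj_{\mathcal{U}}$ need not be inactive, and your identity $\Delta\bm{u}[k] = -\beta_k e[k-1]\hat{\bm{\phi}}[k]$ is not justified. (Contrast this with Theorem~\ref{thm2}, where the stronger hypothesis $\bm{u}[k] \in (\underline{\bm{u}}, \overline{\bm{u}})$ is explicitly assumed.)

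The paper handles this via Lemma~\ref{lemma:remove_proj}: it writes $\Delta\bm{u}[k] = -\beta_k e[k-1]\bar{\bm{\phi}}[k]$ with $\zeros_n \le \bar{\bm{\phi}}[k] \le \hat{\bm{\phi}}[k]$, so that clipped coordinates contribute a value in $[0,\hat{\phi}_i[k]]$ rather than exactly $\hat{\phi}_i[k]$. The vector inequality $\bm{u}[k] \neq \underline{\bm{u}}$, $\bm{u}[k] \neq \overline{\bm{u}}$ then yields only that \emph{some} index $i$ has $\bar{\phi}_i[k] = \hat{\phi}_i[k] \ge \underline{b}_1$, giving the lower bound $\bm{\phi}[k]^\top \bar{\bm{\phi}}[k] \ge \phi_i[k]\bar{\phi}_i[k] \ge \underline{b}_1^2$, not your $n\underline{b}_1^2$. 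Fortunately this weaker bound is exactly what the step-size condition $\beta_k > \epsilon/\underline{b}_1^2$ is calibrated to: one still obtains $\beta_k\,\bm{\phi}[k]^\top\bar{\bm{\phi}}[k] > \epsilon$ and hence $|e[k]/e[k-1]| < 1-\epsilon$. Your upper bound $n\overline{b}_1^2$ survives unchanged since $\bar{\phi}_i[k] \le \overline{b}_1$ for every $i$. So the repair is localized---replace $\hat{\bm{\phi}}[k]$ by $\bar{\bm{\phi}}[k]$ and drop the factor $n$ in the lower bound---but without it the proof as written does not cover the stated hypothesis.
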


We refer the readers to Appendix \ref{appendix:thm1} for detailed proofs of the convergence results above.

Next, we establish  the convergence of the estimation step.
Define the estimation error vector at iteration $k$ as $\bm{\varepsilon}[k] = \hat{\bm{\phi}}[k] - \bm{\phi}[k]$. 
Since both $\hat{\bm{\phi}}[k]$ and $\bm{\phi}[k]$ are bounded, $\bm{\varepsilon}[k]$ is also bounded.
Define $\Delta \bm{\phi}[k] = \bm{\phi}[k] - \bm{\phi}[k-1]$.
The convergence result for the estimation step is stated next.
\begin{theorem} \label{thm2}
	Using the estimation update  rule in \eqref{eq:estimation} and the  control update rule \eqref{eq:control}, with $\alpha_{k+1} = \frac{2}{\norm{\Delta \bm{u}[k]}^2}$, $\beta_k \in (\frac{\epsilon}{n \underline{b}_1^2}, \frac{1}{n \overline{b}_1^2})$, where $0 < \epsilon < \frac{\underline{b}_1^2}{\overline{b}_1^2}$ is a given parameter, if $\bm{u}[k] \in (\underline{\bm{u}}, \overline{\bm{u}})$ and $e[k] \neq 0$, $\forall k \in \bbN$, then $\norm{\bm{\varepsilon}[k]}$ converges to $0$ a.s.
\end{theorem}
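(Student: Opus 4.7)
The plan is to track the estimation error via the Lyapunov function $V_k := \norm{\bm{\varepsilon}[k]}^2$ and combine a projection non-expansiveness bound with a supermartingale-convergence argument. The first step is a clean one-step error recursion: using the MVT identity $\Delta y[k] = \bm{\phi}[k]^\top \Delta \bm{u}[k]$ implicit in the IO model \eqref{eq:io_model}, the estimation step \eqref{eq:estimation} can be rewritten as
$$ \hat{\bm{\phi}}[k+1] = \proj_{\calB}\bigl( \bm{\phi}[k] + R_k \bm{\varepsilon}[k] \bigr), $$
where $R_k := I - \frac{2}{\norm{\Delta \bm{u}[k]}^2} \Delta \bm{u}[k] \Delta \bm{u}[k]^\top$. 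With the prescribed $\alpha_{k+1} = 2/\norm{\Delta \bm{u}[k]}^2$, $R_k$ is a Householder reflection, so $\norm{R_k \bm{\varepsilon}[k]} = \norm{\bm{\varepsilon}[k]}$. Since $\bm{\phi}[k] \in \calB$ by Assumption~\ref{a2} and $\proj_{\calB}$ is non-expansive,
$$ \norm{\hat{\bm{\phi}}[k+1] - \bm{\phi}[k]} \leq \norm{R_k \bm{\varepsilon}[k]} = \norm{\bm{\varepsilon}[k]}, $$
and after accounting for the drift $\Delta \bm{\phi}[k+1] = \bm{\phi}[k+1] - \bm{\phi}[k]$, one gets a recursion of the form $V_{k+1} \leq V_k - \rho_k^2 + (\text{cross and drift terms})$, where $\rho_k^2 \geq 0$ is the projection-induced decrement.

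The second step is to control the drift. Assumption~\ref{a2} gives $\norm{\Delta \bm{\phi}[k+1]} \leq b_2 (\norm{\Delta \bm{u}[k]} + \norm{\Delta \bm{u}[k+1]})$, and under the interior-control hypothesis $\bm{u}[k] \in (\underline{\bm{u}}, \overline{\bm{u}})$, the first case of Theorem~\ref{thm1} applies, yielding $e[k] \to 0$ and $\Delta \bm{u}[k] \to \zeros_n$ a.s. A quantitative sharpening in the spirit of Corollary~\ref{corollary:1} (adapted to the stochastic setting) should additionally deliver geometric decay of $|e[k]|$ and thus $\sum_k \norm{\Delta \bm{u}[k]}^2 < \infty$ a.s., making the drift contribution to the recursion almost surely summable.

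The third step is to extract a strictly positive expected decrement from the random perturbations. Under the interior-control hypothesis and the condition $e[k] \neq 0$, one has the explicit form $\Delta \bm{u}[k] = -\beta_k e[k-1]\, \bm{W}[k]\, \hat{\bm{\phi}}[k]$, so conditionally on $\calF_{k-1}$ the reflection direction $\hat{\bm{v}}_k := \Delta \bm{u}[k]/\norm{\Delta \bm{u}[k]}$ is, up to sign, a Bernoulli-masked realization of $\hat{\bm{\phi}}[k]$. Exploiting the coordinate-wise positivity $\hat{\phi}_i[k] \geq \underline{b}_1 > 0$ and averaging over the $2^n$ realizations of $\bm{W}[k]$, the goal is to lower bound
$$ \bbE\bigl[\rho_k^2 \mid \calF_{k-1}\bigr] \geq c\, V_k $$
for a constant $c > 0$ depending only on $\underline{b}_1, \overline{b}_1, n$. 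Inserted into the Robbins--Siegmund supermartingale convergence lemma together with the summable drift, this yields $V_k \to 0$ a.s.

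The main obstacle is precisely the decrement bound in the third step. Because a Householder reflection is norm-preserving, the only source of contraction in a single step is the projection onto $\calB$, yet this projection acts trivially whenever $\bm{\phi}[k] + R_k \bm{\varepsilon}[k]$ already lies in $\calB$---the typical case when $\bm{\phi}[k]$ is strictly interior and $V_k$ is small. Overcoming this will likely require analyzing several consecutive iterations jointly: across $O(n)$ random reflections, the error vector traces a random orthogonal walk whose cumulative trajectory must exit $\calB$ with probability bounded away from zero, so that the projections contribute a net quadratic contraction. An alternative cleaner route is to directly track the prediction residual $\bm{\varepsilon}[k]^\top \Delta \bm{u}[k]$, whose sign flips at each reflection, and bound its weighted partial sums using a randomized-Kaczmarz-style identity adapted to the over-relaxed step size $2/\norm{\Delta \bm{u}[k]}^2$.
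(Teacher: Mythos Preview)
Your diagnosis of the difficulty is correct, but it stems from taking the step size $\alpha_{k+1}=2/\norm{\Delta\bm{u}[k]}^2$ in the theorem statement at face value. With that choice $R_k$ is indeed a Householder reflection, $\norm{R_k\bm{\varepsilon}[k]}=\norm{\bm{\varepsilon}[k]}$, and the only possible contraction in a single step would come from the box projection onto $\calB$; as you observe, this vanishes whenever the reflected point lands in the interior, so neither a one-step nor an $O(n)$-step argument along the lines you sketch will produce a clean decrement bound. The paper's proof does \emph{not} work with the reflection. It analyzes $g(\alpha)=\norm{(I-\alpha\,\Delta\bm{u}[k]\Delta\bm{u}[k]^\top)\bm{\varepsilon}[k]}^2$ and takes $\alpha_{k+1}$ at the minimizer $1/\norm{\Delta\bm{u}[k]}^2$, so that $I-\alpha_{k+1}\Delta\bm{u}[k]\Delta\bm{u}[k]^\top$ is the orthogonal projector onto $\Delta\bm{u}[k]^\perp$. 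This immediately yields the per-step contraction
\[
\norm{\bm{\varepsilon}[k+1]}\le |\sin\theta_k|\,\norm{\bm{\varepsilon}[k]}+\norm{\Delta\bm{\phi}[k+1]},\qquad
\cos\theta_k=\frac{\bm{\varepsilon}[k]^\top\bm{W}[k]\hat{\bm{\phi}}[k]}{\norm{\bm{\varepsilon}[k]}\,\norm{\bm{W}[k]\hat{\bm{\phi}}[k]}},
\]
with the box projection used only for non-expansiveness, not for contraction. In other words, the ``2'' in the statement should be read as ``1''; once you make that substitution your first two steps line up with the paper.

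Even after that fix, your third step is not how the paper closes the argument, and your proposed route has a gap. A uniform bound $\bbE[\cos^2\theta_k\mid\calF_{k-1}]\ge c>0$ is not what is used (and would be delicate to prove since $\bm{W}[k]=\bm{0}$ occurs with positive probability and makes $\theta_k$ undefined). Instead, the paper first applies the deterministic Robbins--Siegmund lemma with $X_k=\norm{\bm{\varepsilon}[k]}$, $Y_k=\norm{\Delta\bm{\phi}[k+1]}$, $Z_k=(1-|\sin\theta_k|)\norm{\bm{\varepsilon}[k]}$ to conclude that $\norm{\bm{\varepsilon}[k]}$ converges to some $\varepsilon^\star\ge 0$ and $\sum_k Z_k<\infty$; summability of $Y_k$ comes from Lemma~\ref{lemma:phi_converge}. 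It then argues by contradiction: if $\varepsilon^\star>0$ then $|\sin\theta_k|\to 1$, hence $|\cos\theta_k|\to 0$, hence $|\bm{\varepsilon}[k]^\top\bm{W}[k]\hat{\bm{\phi}}[k]|\to 0$. By Borel--Cantelli, for each $i$ the single-coordinate event $\{w_i[k]=1,\ w_j[k]=0\ \forall j\ne i\}$ occurs infinitely often, and along that subsequence the inner product reduces to $|\varepsilon_i[k]\hat{\phi}_i[k]|$; since $\hat{\phi}_i[k]\ge\underline{b}_1>0$, this forces $\varepsilon_i[k]\to 0$ for every $i$, contradicting $\varepsilon^\star>0$. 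This subsequence-plus-contradiction device replaces the expected-decrement bound you were aiming for and is the piece you are missing.
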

The intuition is that the estimation error goes to zero if the system can be continuously excited (guaranteed by the condition $\bm{u}[k] \in (\underline{\bm{u}}, \overline{\bm{u}})$ and $e[k] \neq 0$, $\forall k \in \bbN$).

We refer the readers to Appendix \ref{appendix:thm2} for detailed proofs of the convergence result above.

\section{Numerical Simulations} \label{sec:simu}

\begin{figure}[!t]
\centering
\includegraphics[width=3.5in]{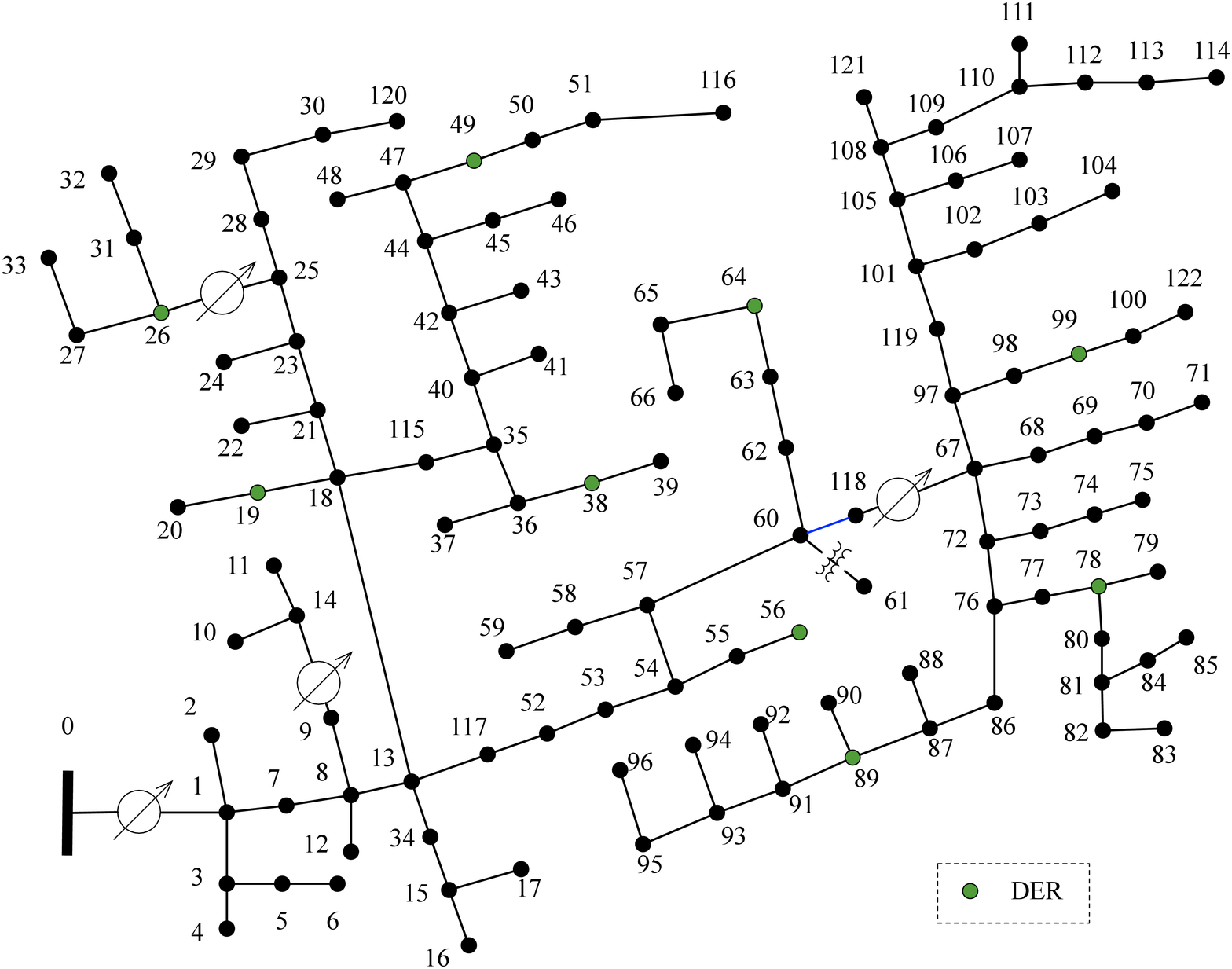}
\caption{IEEE 123-bus distribution test feeder.}
\label{fig:123bus}
\end{figure}

In this section, we illustrate the application of the proposed DER coordination framework and validate the theoretical results presented earlier. 
From a practical point of view, the timescale separation illustrated in Fig. \ref{fig:timeline} is critical for the applicability of the proposed framework.
Specifically, the estimation process needs to be much faster than the timescale governing the load changes.
The DERs, which are typically power electronics interfaced, can respond very quickly, on a timescale of millisecond to second \cite{ajala2017hierarchy}.
In this simulation, we set the duration between two iterations to be $100$~millisecond.
We will show later that under this setup, the requirements on the time separation can be reasonably met.

A modified three-phase balanced IEEE 123-bus distribution test feeder from \cite{test_feeder} (see Fig. \ref{fig:123bus} for the one-line diagram) is used for all numerical simulations.
This balanced test feeder has a total active power load of $3000$~kW, and a total reactive power load of $1575$~kVAr.
DERs are added at buses 19, 26, 38, 49, 56, 64, 78, 89, 99.
We assume each DER can output active power from $0$~kW to $100$~kW.
Therefore, the maximum DER active power injections accounts for $30$\% of the nominal loads.
To illustrate the impacts of reactive power control, we assume all DERs operate at unity power factor except DERs at buses 78 and 89, which are assumed to have enough reactive power capacity and maintain a constant voltage magnitude of $0.95$ p.u.
Yet, we would like to emphasize that the proposed algorithm is agnostic to the underlying reactive power control scheme and also works under other reactive power control schemes.
In addition, to validate the effectiveness of the proposed algorithm under different operating conditions of the power distribution system, we assume there are some uncontrollable renewable energy resources in the power distribution system, which are modeled as negative loads.
The underlying nonlinear power flow problem is solved using Matpower \cite{matpower}.

In all subsequent simulations, we set $\underline{b}_1=0.8$, $\overline{b}_1=1.2$, which are reasonable values for real power systems.
Intuitively, these values indicate that the percentage of active power losses will be no larger than $20\%$ of the total active power injections.
Note that the exact value of $b_2$ is not necessary.
Under this simulation setting, as given in Theorem \ref{thm1}, the upper bound of the control step size is $\frac{1}{n\overline{b}_1^2} \approx 0.0694$.

We note that comprehensive simulations including the two timescales can be done using data such as ones adopted in \cite{robbins2016optimal}.
However, since the ODCP to be solved on the slow timescale is a standard problem, we will mainly focus on simulations for the fast timescale, where our major contributions lie.

\subsection{Case I}

In this case, we assume the power distribution system is importing energy from the bulk power system with $y = -3110$~kW.
This corresponds to the situation where the uncontrollable renewable energy resources are not generating more active power than that needed by the loads.
In addition, we set $\hat{\bm{\phi}}^0 = \ones_n$ and $\bm{u}^0 = \zeros_n$, where $\ones_n \in \bbR^n$ is an all-ones vector.

\subsubsection{Tracking Performance During Estimation}

\begin{figure}[!t]
\centering
\includegraphics[width=3.5in]{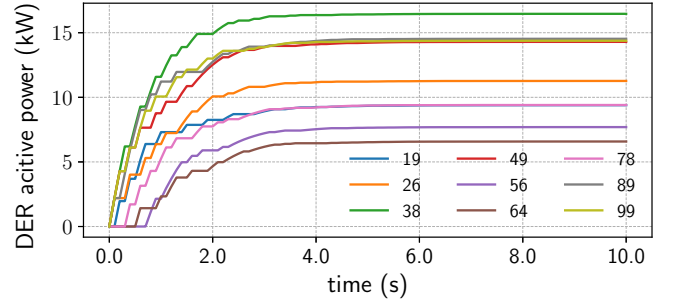}
\caption{DER active power injections for $\beta_k = 0.02$ and $y^\star=-3000$ ~kW in Case I. (Legends indicate DER buses.)}
\label{fig:control}
\end{figure}

\begin{figure}[!t]
\centering
\includegraphics[width=3.5in]{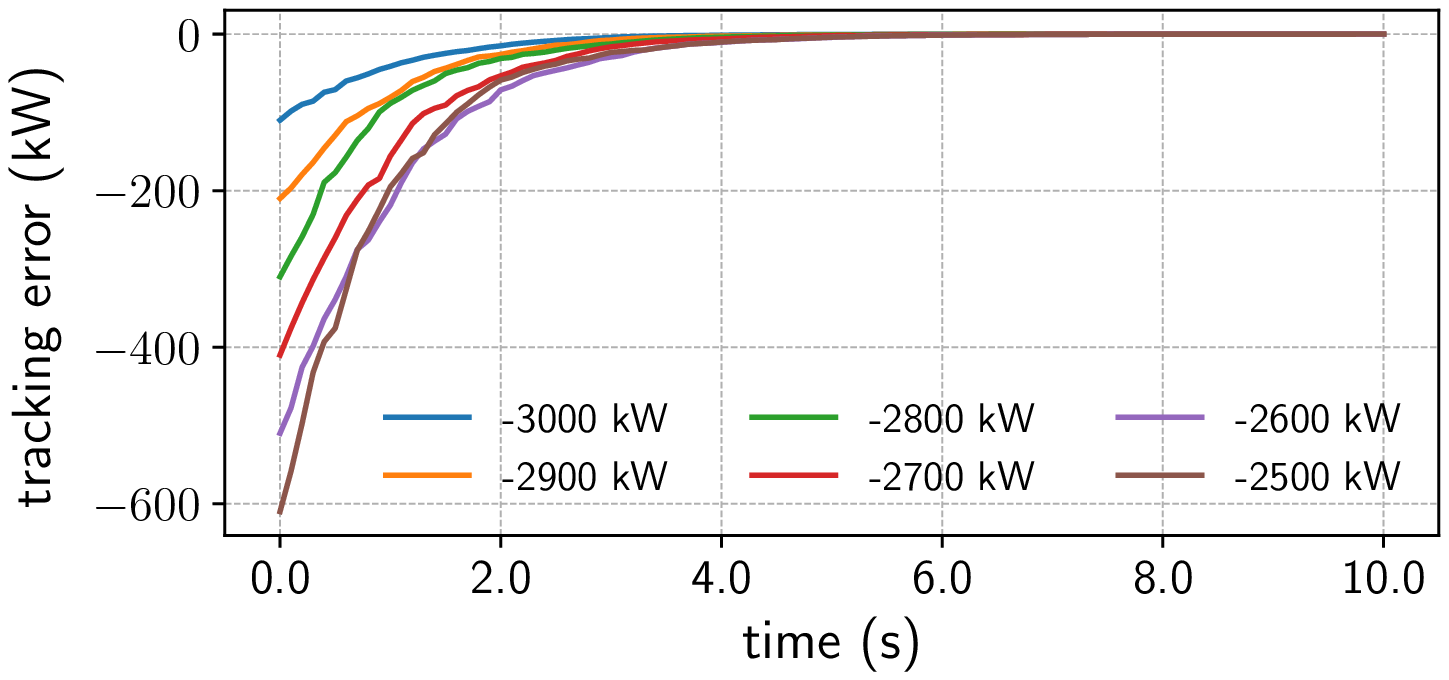}
\caption{Tracking error for $\beta_k = 0.02$ under various tracking targets in Case I. (Legends indicate values of $y^\star$.)}
\label{fig:tracking_error_output_target_import}
\end{figure}

\begin{figure}[!t]
\centering
\includegraphics[width=3.5in]{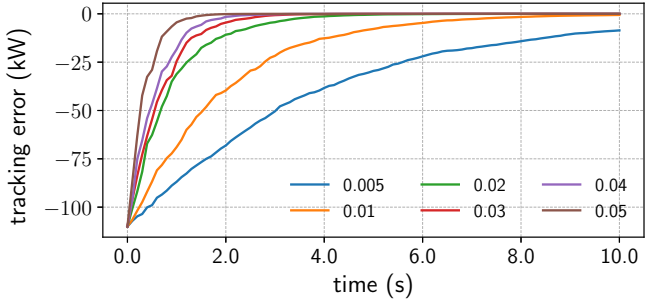}
\caption{Tracking error for $y^\star=-3000$~kW and various constant control step sizes in Case I. (Legends indicate values of $\beta_k$.)}
\label{fig:tracking}
\end{figure}

For $y^\star = -3000$~kW and a constant step size $\beta_k = 0.02$, the DER active power injections are shown in Fig.~\ref{fig:control}.
The non-smoothness in the active power profiles is caused by the random perturbation imposed on the control step.
Also as shown in Fig. \ref{fig:tracking_error_output_target_import}, the convergence rate of the tracking error is not affected by the tracking target, i.e., the total active power required from the bulk power system. The tracking error $e[k]$ under various constant control step sizes is shown in Fig.~\ref{fig:tracking}.
As expected, a larger step size will reduce the tracking error faster than a small step size.

\subsubsection{Estimation Accuracy}

\begin{table}[!t]
\caption{Estimated Sensitivities in Case I After $60$ Iterations}
\label{table:estimation_error_1}
\centering
\begin{tabular}{ccccccc}
\toprule
bus & 19 & 26 & 38 & 49 & 56  \\
\midrule
true & 1.0394 & 1.0413 & 1.0426 & 1.0454 & 1.0467 \\
estimate & 1.0342 & 1.0390 & 1.0440 & 1.0468 & 1.0421 \\
\midrule
\midrule
bus & 64 & 78 & 89 & 99  \\
\midrule
true & 1.0702 & 1.0703 & 1.0749 & 1.0711 \\
estimate & 1.0696 & 1.0697 & 1.0817 & 1.0702 \\
\bottomrule
\end{tabular}
\end{table}

\begin{figure}[!t]
\centering
\includegraphics[width=3.5in]{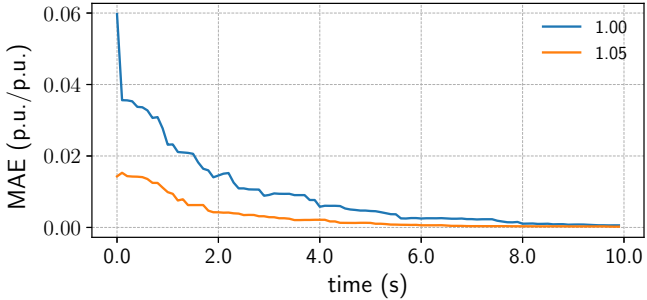}
\caption{MAE of estimation errors with $\beta_k = 0.02$ in Case I. (Legends indicate the values of initial estimates.)}
\label{fig:estimate_error}
\end{figure}

With $\beta_k=0.02$ and $y^\star=-3000$~kW, true and estimated sensitivities are compared in Table \ref{table:estimation_error_1} and the mean absolute error (MAE) of estimation errors, i.e., $\frac{\sum_{i=1}^n |\varepsilon_i[k]|}{N}$, is plotted in Fig. \ref{fig:estimate_error}.
The estimated sensitivities are very close to their true values after $60$ steps, which corresponds to $6$~s.
Note that $\hat{\bm{\phi}}^0$ has an important impact on the convergence of the sensitivity estimation algorithm.
As can be seen from Fig. \ref{fig:estimate_error}, when the initial values of the estimated sensitivities are set to $1.05$, which is closer to their true values, it takes much less time to obtain a small estimation error.

\begin{figure}[!t]
\centering
\includegraphics[width=3.5in]{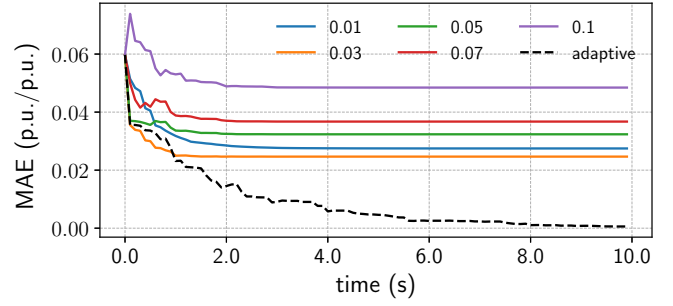}
\caption{MAE of estimation errors with $\beta_k=0.02$ under various estimation step sizes in Case I. (Legends indicate values of $\alpha_k$.)}
\label{fig:estimation_error_step_size_e}
\end{figure}

\begin{figure}[!t]
\centering
\includegraphics[width=3.5in]{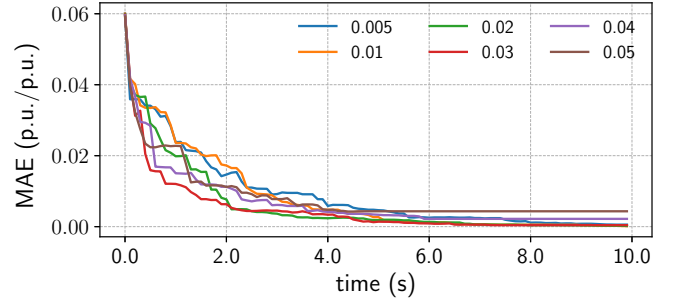}
\caption{MAE of estimation errors under various control step sizes in Case I. (Legends indicate values of $\beta_k$.)}
\label{fig:estimation_error_step_size_c}
\end{figure}

While the estimation step size, $\alpha_k$, in the proposed algorithm is adaptive, we also investigate the case when $\alpha_k$ is chosen to be constant.
Figure \ref{fig:estimation_error_step_size_e} shows the MAE of estimation error under various constant estimation step sizes.
As can be seen from Fig. \ref{fig:estimation_error_step_size_e}, the MAE of estimation will converge to some non-zero constant under constant estimation step sizes.

The impact of the control step sizes on the estimation accuracy is also investigated.
Figure~\ref{fig:estimation_error_step_size_c} shows the MAE of estimation errors under various control step sizes.
With a large control step size, the tracking error converges to $0$ quickly, leading to a situation where the system cannot get sufficient excitation and consequently, the estimation errors cannot be further reduced.

\subsection{Case II}

In this case, we assume the power distribution system is exporting energy to the bulk power system with $y = 1000$~kW.
This corresponds to the situation where the uncontrollable renewable energy resources are generating much more active power than that needed by the loads.
We set $\hat{\bm{\phi}}^0 = \ones_n$ and $\bm{u}^0 = \zeros_n$.

\subsubsection{Tracking Performance During Estimation}

\begin{figure}[!t]
\centering
\includegraphics[width=3.5in]{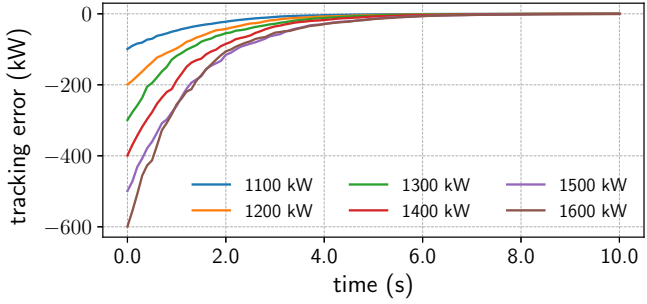}
\caption{Tracking error for $\beta_k = 0.02$ under various tracking targets in Case II. (Legends indicate values of $y^\star$.)}
\label{fig:tracking_error_output_target_export}
\end{figure}

Using a constant step size $\beta_k = 0.02$, the convergence rate of the tracking error under various tracking target is shown in Fig. \ref{fig:tracking_error_output_target_export}.
Similar to results in Case I, the convergence rate is not affected by the tracking target.

\subsubsection{Estimation Accuracy}

\begin{table}[!t]
\caption{Estimated Sensitivities in Case II After $60$ Iterations}
\label{table:estimation_error_export}
\centering
\begin{tabular}{ccccccc}
\toprule
bus & 19 & 26 & 38 & 49 & 56  \\
\midrule
true & 0.9533 & 0.9526 & 0.9518 & 0.9509 & 0.9254 \\
estimate & 0.9588 & 0.9512 & 0.9497 & 0.9475 & 0.9285 \\
\midrule
\midrule
bus & 64 & 78 & 89 & 99  \\
\midrule
true & 0.8872 & 0.8477 & 0.8488 & 0.8700 \\
estimate & 0.8854 & 0.8536 & 0.8396 & 0.8707 \\
\bottomrule
\end{tabular}
\end{table}

\begin{figure}[!t]
\centering
\includegraphics[width=3.5in]{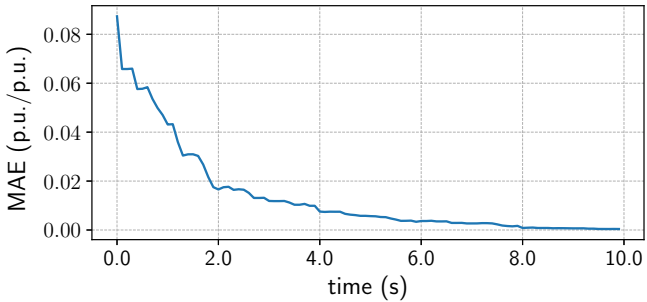}
\caption{MAE of estimation errors with $\beta_k = 0.02$ in Case II.}
\label{fig:estimation_error_export}
\end{figure}

With $\beta_k=0.02$ and $y^\star=1100$~kW, true and estimated sensitivities are compared in Table \ref{table:estimation_error_export} and the MAE of estimation errors is plotted in Fig. \ref{fig:estimation_error_export}, respectively.
Similar to the results in Case I, the estimated sensitivities are very close to their true values after $60$ steps.
This verifies that the proposed estimation algorithm can effectively estimate the sensitivities under different operating conditions of the power distribution system.

We note that the performance of the proposed estimation algorithm is independent of the number of DERs.
To see this, we simulate a case where the DER at bus 99 gets disconnected and consequently, there are $8$ DERs in the power distribution system.
The sensitivities at these $8$ DER buses can still be estimated effectively, as is shown in Fig. \ref{fig:estimation_error_DER_loss}.

\begin{figure}[!t]
\centering
\includegraphics[width=3.5in]{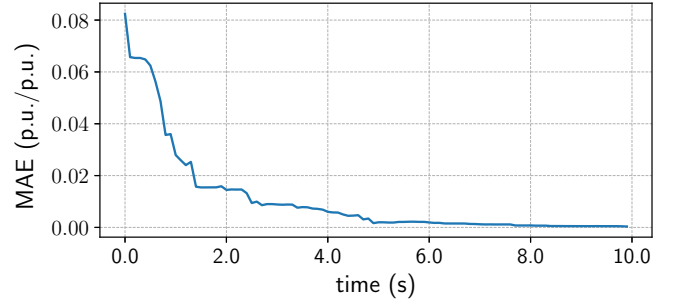}
\caption{MAE of estimation errors with $\beta_k = 0.02$ in Case II with $8$ DERs.}
\label{fig:estimation_error_DER_loss}
\end{figure}

\subsection{Case III}

In this case, we illustrate how the proposed framework handles line congestions.
The setup is the same as Case II except that the tracking target is $y^\star=1500$~kW.
We set the capacity limit of line $(55, 56)$ to $40$~kW to create congestion.
For simplicity, all other lines are assumed to have an infinite capacity; yet, we would like to emphasize that the proposed framework can handle multiple line congestions.
The objective function in the ODCP is assumed to be $c(\bm{p}^g) = \norm{\bm{p}^g - \tilde{\bm{p}}^g}^2$, where $\tilde{\bm{p}}^g$ is the current DER active power injection vector as used in \eqref{eq:ODCP2}.
Intuitively, this objective function will favor the solution with the least change in the DER active power injections.

The estimation algorithm is first ran to obtain an estimate of the sensitivity vector.
After the estimation algorithm ends, the DER at bus 56 is generating $51.3$~kW, which exceeds the capacity limit of line $(55, 56)$.
The ODCP is run afterwards to adjust the active power set-points of the DERs.
Figure \ref{fig:set_point_ODCP} shows the DER active power set-point before and after solving the ODCP.
The DER at bus 56 is dispatched down to $40$~kW, which conforms with the capacity limit of line $(55, 56)$.
All other DERs are dispatched up such that the active power exchanged between the distribution and bulk power systems still equals to $1500$~kW.
We note that line $(55, 56)$ is overloaded for a short period during the estimation process but the is quickly restored to a normal loading level after the DER active power set-points are adjusted via the ODCP.

\begin{figure}[!t]
\centering
\includegraphics[width=3.5in]{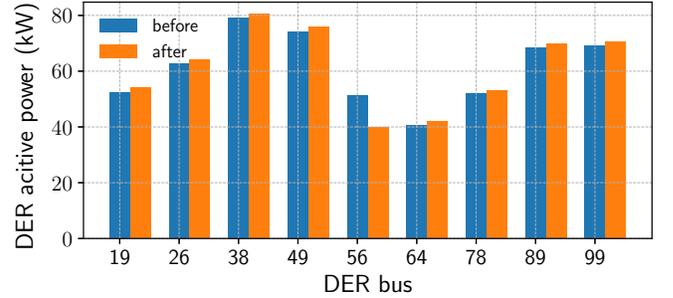}
\caption{DER active power set-point before and after solving the ODCP in Case III.}
\label{fig:set_point_ODCP}
\end{figure}

\section{Concluding Remarks} \label{sec:con}

In this paper, we proposed a data-driven coordination framework for DERs in a lossy power distribution system, the model of which is not completely known, to collectively provide some pre-specified amount of active power to a bulk power system at a minimum generating cost, while respecting distribution line capacity limits.
The proposed framework consists of a LTV IO model, an estimator, and a controller.

We showed that using the estimation algorithm proposed in the framework, the estimated parameters converge to the true parameters a.s., and the total provided active power converges to the required amount during the estimation process.
The data-driven nature of this framework makes it adaptive to various system operating conditions.
We validated the effectiveness of the proposed framework through numerical simulations on a modified version of the IEEE 123-bus test feeder.

There are two potential directions for future work.
The first one is to develop efficient estimation algorithms for scenarios in which the estimation process (with random perturbations) cannot be executed on a timescale that is much faster than the one on which the loads vary, and quantifies the impacts of load variability on the performance of the algorithm; this essentially relaxes Assumption \ref{a3}.
The second one is to extend the proposed estimation algorithms to scenarios where there are multiple connection buses between the distribution system and the bulk power system, and consequently, multiple outputs.
\color{black}

\appendix \label{appendix}
\numberwithin{equation}{section}
\setcounter{equation}{0}

\subsection{Proof of Theorem \ref{thm1}} \label{appendix:thm1}

The convergence analysis of the control step during the estimation process relies on the following two lemmas.
\begin{lemma} \label{lemma:remove_proj}
	There exists $\bar{\bm{\phi}}[k]$ satisfying $\zeros_n \leq \bar{\bm{\phi}}[k] \leq \hat{\bm{\phi}}[k]$, such that \eqref{eq:control} is equivalent to
	\begin{align*}
		\bm{u}[k] = \bm{u}[k-1] - \beta_k e[k-1] \bm{W}[k] \bar{\bm{\phi}}[k].
	\end{align*}
	Also, $\bar{\bm{\phi}}[k] = \zeros_n$ if and only if $\bm{u}[k] = \underline{\bm{u}}$ or $\bm{u}[k] = \overline{\bm{u}}$.
	Furthermore, if $\bm{u}[k] \neq \underline{\bm{u}}$ and $\bm{u}[k] \neq \overline{\bm{u}}$, there exists $i \in \calN^g$ such that $\bar{\phi}_i[k] = \hat{\phi}_i[k] \in [\underline{b}_1, \overline{b}_1]$.
\end{lemma}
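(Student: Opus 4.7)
The plan is to exploit the product structure of $\mathcal{U} = \prod_i [\underline{u}_i, \overline{u}_i]$, which makes $\proj_{\mathcal{U}}$ act as independent scalar clippings. Because $w_i[k] \in \{0,1\}$ and $\hat\phi_i[k] \geq \underline{b}_1 > 0$, each coordinate's putative step $-\beta_k e[k-1] w_i[k] \hat\phi_i[k]$ is either zero or a nonzero movement whose \emph{sign is common across all coordinates} (it equals $-\mathrm{sgn}(e[k-1])$ whenever $w_i[k]=1$). I would build $\bar\phi_i[k]$ coordinate-wise as a scalar in $[0,\hat\phi_i[k]]$ that records the fraction of the intended step actually consumed in coordinate $i$.

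Concretely, for $w_i[k]=1$ I would set $\bar\phi_i[k] = \hat\phi_i[k]$ when the unprojected update $u_i[k-1] - \beta_k e[k-1] \hat\phi_i[k]$ already lies in $[\underline{u}_i,\overline{u}_i]$, and $\bar\phi_i[k] = (u_i[k-1] - u_i[k])/(\beta_k e[k-1])$ when the clipping is active. A short sign check using $u_i[k-1] \in [\underline{u}_i,\overline{u}_i]$ shows this ratio lies in $[0,\hat\phi_i[k]]$. For $w_i[k]=0$ the identity $u_i[k] = u_i[k-1] - \beta_k e[k-1] w_i[k] \bar\phi_i[k]$ is trivially satisfied, so I would tie-break by setting $\bar\phi_i[k] = \hat\phi_i[k]$ whenever $u_i[k-1]$ is \emph{not} sitting on the boundary that the common step direction would press against, and $\bar\phi_i[k] = 0$ otherwise. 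By construction this gives $\zeros_n \leq \bar{\bm{\phi}}[k] \leq \hat{\bm{\phi}}[k]$ together with the claimed equivalent form of \eqref{eq:control}.

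For the equivalence, the key observation is that the common step direction forces any clipping event to land on the \emph{same} boundary ($\underline{\bm{u}}$ if $e[k-1]>0$, $\overline{\bm{u}}$ if $e[k-1]<0$). Under the tie-breaking above, $\bar\phi_i[k]=0$ occurs exactly when $u_i[k-1]$ already sits on that common boundary and the (possibly null) step fails to move it off; hence $\bar{\bm{\phi}}[k] = \zeros_n$ implies every coordinate is pinned at the same boundary, giving $\bm{u}[k] = \underline{\bm{u}}$ or $\bm{u}[k]=\overline{\bm{u}}$. The converse is immediate by tracing the cases. For the final claim I would argue contrapositively: if no coordinate satisfied $\bar\phi_i[k]=\hat\phi_i[k]$, then by the construction every coordinate was either clipped back to the common boundary or had $w_i[k]=0$ with $u_i[k-1]$ already at that boundary, so $\bm{u}[k]$ would equal $\underline{\bm{u}}$ or $\overline{\bm{u}}$, contradicting the hypothesis; the membership $\hat\phi_i[k] \in [\underline{b}_1,\overline{b}_1]$ then follows from $\hat{\bm{\phi}}[k] \in \calB$.

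The main obstacle is the ambiguity in the $w_i[k]=0$ case: the defining identity does not pin down $\bar\phi_i[k]$, and the two directions of the iff pull in opposite directions (a naive uniform choice of either $\hat\phi_i[k]$ or $0$ breaks one direction). Aligning the tie-breaking rule with the sign of $e[k-1]$, and then invoking the uniformity of that sign across coordinates to force all active clippings onto the same boundary, is what lifts the coordinate-wise analysis to the vector-level equivalence claimed in the lemma.
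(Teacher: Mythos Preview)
Your approach is essentially the paper's: a coordinate-wise construction exploiting the product structure of $\mathcal{U}$ together with the fact that the sign of $-\beta_k e[k-1]$ is common to all coordinates, so any clipping lands on a single common face. You are more explicit than the paper about the degenerate $w_i[k]=0$ case; the paper simply sets $\bar\phi_i[k]=\hat\phi_i[k]$ there without comment, whereas you introduce a sign-aligned tie-breaking rule. Your contrapositive argument for the final claim is a clean equivalent of the paper's direct argument (which picks an $i$ with $\underline u_i<u_i[k]<\overline u_i$ and observes no clipping occurred there).

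One gap remains: your assertion that ``the converse is immediate by tracing the cases'' does not go through. If $w_i[k]=1$, $u_i[k-1]$ lies strictly in the interior, and the step is large enough that the projection clips $u_i[k]$ to the common boundary, then your formula gives $\bar\phi_i[k]=(u_i[k-1]-u_i[k])/(\beta_k e[k-1])>0$; thus $\bm u[k]=\overline{\bm u}$ (or $\underline{\bm u}$) need not force $\bar{\bm\phi}[k]=\zeros_n$. What your construction actually yields is the coordinate-wise statement ``$\bar\phi_i[k]=0$ iff $u_i[k-1]$ sits on the common boundary,'' which concerns $\bm u[k-1]$, not $\bm u[k]$. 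This is really a defect in the lemma's stated iff rather than in your strategy: the paper's proof establishes exactly the same $u_i[k-1]$-based equivalence and never bridges it to $\bm u[k]$ either. It is also harmless downstream, since Theorem~\ref{thm1} invokes only the existence of $\bar{\bm\phi}[k]$ with the stated bounds and the final claim, both of which you handle correctly.
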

\begin{proof} If $\bm{u}[k-1] - \beta_k e[k-1] \bm{W}[k] \hat{\bm{\phi}}[k] \in \mathcal{U}$, then we simply set $\bar{\bm{\phi}}[k] = \hat{\bm{\phi}}[k]$.
	Without loss of generality, first consider the case where the following holds for some $i \in \calN^g$:
	\begin{align} \label{eq:l2_1}
		u_i[k-1] - \beta_k e[k-1] w_i[k] \hat{\phi}_i[k] > \overline{u}_i.
	\end{align}
	Then, $e[k-1] < 0$ and $w_i[k] > 0$ since otherwise \eqref{eq:l2_1} cannot not hold.
	Therefore,
	\begin{align} \label{eq:l2_3}
		u_i[k] = \proj_{\mathcal{U}}(u_i[k-1] - \beta_k e[k-1] w_i[k] \hat{\phi}_i[k]) = \overline{u}_i.
	\end{align}
	Let $\bar{\phi}_i[k] = \frac{u_i[k-1] - \overline{u}_i}{\beta_k e[k-1] w_i[k]}$; by definition, $\bar{\phi}_i[k] = 0$ if and only if $u_i[k-1] = \overline{u}_i$.
	Then, we have that:
	\begin{align} \label{eq:l2_4}
		u_i[k] = u_i[k-1] - \beta_k e[k-1] w_i[k] \bar{\phi}_i[k].
	\end{align}
	If follows from \eqref{eq:l2_1}, \eqref{eq:l2_3}, and \eqref{eq:l2_4} that
	\begin{align} \label{eq:l2_5}
		\beta_k e[k-1] \hat{\phi}_i[k] w_i[k] < \beta_k e[k-1] \bar{\phi}_i[k] w_i[k],
	\end{align}
	which leads to $0 \leq \bar{\phi}_i[k] < \hat{\phi}_i[k]$.
	A similar argument can be used to for the case where $u_i[k-1] - \beta_k e[k-1] w_i[k] \hat{\phi}_i[k] < \underline{u}_i$ and for some $i \in \calN^g$.
	
	If $\bm{u}[k] \neq \underline{\bm{u}}$ and $\bm{u}[k] \neq \overline{\bm{u}}$, then there exists $i \in \calN^g$ such that $\underline{u}_i < u_i[k] < \overline{u}_i$, which implies
	\begin{align}
		u_i[k] = u_i[k-1] - \beta_k e[k-1] w_i[k] \hat{\phi}_i[k].
	\end{align}
	Therefore, $\bar{\phi}_i[k] = \hat{\phi}_i[k]$. 
	Consequently, $\bar{\phi}_i[k] = \hat{\phi}_i[k] \in [\underline{b}_1, \overline{b}_1]$.
	It can be easily seen that if $\mathcal{U}$ is sufficiently large and no DER hits its capacity limits, then  $\bar{\bm{\phi}}[k] = \hat{\bm{\phi}}[k]$.
\end{proof}

\begin{lemma} \label{lemma:rv_product}
	Let $X_k$, $k=1, 2, \cdots$, be independently identically distributed (i.i.d.) random variables. Assume $X_k > 0$ and $\expect{X_k} \in (0, 1)$, where $\mathbb{E}$ denotes expectation. Let $Y_k = \prod_{i=1}^k X_i$. Then, $\lim_{k \to \infty} Y_k = 0$ a.s.
\end{lemma}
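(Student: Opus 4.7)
The plan is a short Markov-plus-Borel--Cantelli argument, which has the virtue of sidestepping any need to control moments of $\log X_k$. Setting $p = \expect{X_1} \in (0,1)$, the independence and identical distribution of the $X_i$'s immediately gives $\expect{Y_k} = \prod_{i=1}^k \expect{X_i} = p^k$. For any fixed $\epsilon > 0$, Markov's inequality (applicable since $Y_k > 0$) yields $\prob{Y_k > \epsilon} \leq p^k/\epsilon$, and $\sum_k p^k < \infty$ because $p < 1$. The first Borel--Cantelli lemma therefore gives $\prob{Y_k > \epsilon \text{ infinitely often}} = 0$, so $\limsup_k Y_k \leq \epsilon$ almost surely.

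To finish, I would intersect this probability-one event over the countable sequence $\epsilon_m = 1/m$, concluding that $\limsup_k Y_k \leq 0$ a.s.; combined with the trivial bound $Y_k \geq 0$, this yields $Y_k \to 0$ a.s.

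An alternative route would be to work directly with $\log Y_k = \sum_{i=1}^k \log X_i$, use Jensen's inequality to get $\expect{\log X_1} \leq \log \expect{X_1} < 0$, and then invoke the strong law of large numbers to conclude $\log Y_k \to -\infty$, hence $Y_k \to 0$ a.s. I find this slightly less clean, because it forces a separate treatment of the case $\expect{\log X_1} = -\infty$ (which would require a one-sided SLLN), whereas the Markov--Borel--Cantelli route needs no moment hypothesis beyond what is already given. I do not anticipate any real obstacle here; the only care needed is to take the countable intersection over $\epsilon_m \downarrow 0$ explicitly rather than attempting to handle all $\epsilon > 0$ simultaneously.
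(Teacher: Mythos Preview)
Your Markov--Borel--Cantelli argument is correct and complete. The paper takes precisely the alternative route you sketched in your second paragraph: it writes $Y_k = \exp{\sum_{i=1}^k \log X_i}$, invokes the strong law of large numbers to get $\frac{1}{k}\sum_i \log X_i \to \expect{\log X_1}$ a.s., applies Jensen's inequality to conclude $\expect{\log X_1} \leq \log \expect{X_1} < 0$, and deduces $\sum_i \log X_i \to -\infty$ a.s.

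Your approach is arguably cleaner for exactly the reason you identified: it needs only $\expect{X_1} \in (0,1)$, whereas the paper's SLLN step tacitly assumes $\expect{\log X_1}$ is well-defined as a finite number and glosses over the case $\expect{\log X_1} = -\infty$ (which, as you note, would require a one-sided SLLN rather than the classical version). In the context where the lemma is actually applied, $X_k$ takes only two values ($1$ and $1-\epsilon$), so $\log X_k$ is bounded and the issue does not arise; but as a standalone lemma your proof is more self-contained. The SLLN route does have the minor advantage of making the exponential decay rate $\expect{\log X_1}$ explicit, though the paper does not exploit this.
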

\begin{proof}
	Note that $Y_k = \exp{\sum_{i=1}^k \log X_i}$. By the Strong Law of Large Numbers (see Proposition 2.15 in \cite{hajek2015random}), we have that
	\begin{align}
		\lim_{k \to \infty} \sum_{i=1}^k \frac{1}{k} \log X_i = \expect{\log X_1},~\text{a.s.}
	\end{align}
	By Jensen's inequality (see Theorem 2.18 in \cite{hajek2015random}), we have that
	\begin{align}
		\expect{\log X_1} \leq \log \expect{X_1} < 0.
	\end{align}
	Therefore,
	\begin{align}
		\lim_{k \to \infty} \sum_{i=1}^k k \frac{1}{k} \log X_i = -\infty,~\text{a.s.},
	\end{align}
	which leads to
	\begin{align}
		\lim_{k \to \infty} Y_k &=  \lim_{k \to \infty} \exp{\sum_{i=1}^k k \frac{1}{k} \log X_i} \nonumber \\
		&= \exp{\lim_{k \to \infty} \sum_{i=1}^k k \frac{1}{k} \log X_i} \nonumber \\
		&= 0,~\text{a.s.};
	\end{align}
	this completes the proof.
\end{proof}

Using Lemma \ref{lemma:remove_proj} and Lemma \ref{lemma:rv_product}, we can prove the Theorem \ref{thm1} as follows:
\begin{proof}
	By \eqref{eq:io_model}, we have that
	\begin{align} \label{thm1:1}
		e[k] - e[k-1] = \bm{\phi}[k]^\top \Delta \bm{u}[k].
	\end{align}
	By Lemma \ref{lemma:remove_proj}, we have that
	\begin{align} \label{thm1:2}
		\Delta \bm{u}[k] = - \beta_k e[k-1] \bm{W}[k] \bar{\bm{\phi}}[k],
	\end{align}
	where $\zeros_n \leq \bar{\bm{\phi}}[k] \leq \hat{\bm{\phi}}[k]$.
	Substituting \eqref{thm1:2} into \eqref{thm1:1} leads to
	\begin{align} \label{thm1:error1}
		e[k] = (1 - \beta_k \bm{\phi}[k]^\top \bm{W}[k] \bar{\bm{\phi}}[k]) e[k-1].
	\end{align}
	Define $\rho_k = 1 - \beta_k \bm{\phi}[k]^\top \bm{W}[k] \bar{\bm{\phi}}[k]$, then
	\begin{align} \label{thm1:error2}
		e[k] = e[0] \prod_{i=1}^{k} \rho_i.
	\end{align}
	By Assumption \ref{a3}, $0 < \underline{b}_1 \leq \phi_i[k] \leq \overline{b}_1$. 
	In addition, it follows from Lemma \ref{lemma:remove_proj} that $0 \leq \bar{\phi}_i[k] \leq \hat{\phi}_i[k] \leq \overline{b}_1$.
	Therefore, $\bm{\phi}[k]^\top \bm{W}[k] \bar{\bm{\phi}}[k]$ can be bounded as follows:
	\begin{align} \label{eq:upper_bouned}
		0 \leq \bm{\phi}[k]^\top \bm{W}[k] \bar{\bm{\phi}}[k] = \sum_{i=1}^n w_i[k] \phi_i[k] \bar{\phi}_i[k] \leq n \overline{b}_1^2.
	\end{align}
	Since $\beta_k < \frac{1}{n \overline{b}_1^2}$, then all $e[k]$ has the same sign for all $k$ (positive if $e[0]>0$, and negative otherwise). As a result, the entries of $\Delta \bm{u}[k]$ always have the same sign by \eqref{thm1:2}.
	
	\noindent (\textit{a}) If $e[k] = 0$ for some $k \in \bbN$, then $\Delta \bm{u}[k+1] = \zeros_n$.
	The control and estimation algorithms will stop updating according to \eqref{eq:estimation} and \eqref{eq:control}. 
	In this case, $\bm{u}[k]$ may equal to $\underline{\bm{u}}$ or $\overline{\bm{u}}$ or neither, and
	the system attains an equilibrium.
	
	\noindent (\textit{b}) Now suppose $e[k] \neq 0, \forall k \in \bbN$. Since the increments of $\bm{u}$ always have the same sign, the entries of $\bm{u}$ cannot hit their bounds in different directions, i.e., some hit their lower bounds while others hit their upper bounds.
	
	\noindent (\textit{b}.1) If $\bm{u}[k] = \underline{\bm{u}}$ for some iteration $k$, then $e[k] > 0, \forall k \in \bbN$.
	By \eqref{eq:control}, we have that
	\begin{align}
		\bm{u}[k+1] = \proj_{\mathcal{U}} ( \underline{\bm{u}} - \beta_{k} e[k] \bm{W}[k+1] \hat{\bm{\phi}}[k+1]) = \underline{\bm{u}}.
	\end{align}
	Thus, $\Delta \bm{u}[k+1] = \zeros_n$, which leads to $e[k+1] = e[k]$ by \eqref{thm1:1}.
	Therefore, $\bm{u}$ will equal to $\underline{\bm{u}}$ and $e[k']=e[k] > 0$ for all $k'>k$.
	
	Similarly, when $\bm{u}[k] = \overline{\bm{u}}$, $\bm{u}$ will be equal to $\overline{\bm{u}}$, and $e$ will be equal to $e[k] < 0$ in all future time intervals.
	The system attains an equilibrium in both cases.
	
	\noindent (\textit{b}.2) If $\bm{u}[k] \neq \underline{\bm{u}}$ and $\bm{u}[k] \neq \overline{\bm{u}}$, $\forall k \in \bbN$, by Lemma \ref{lemma:remove_proj}, there exists $i \in \calN^g$ such that $\bar{\phi}_i[k] \in [\underline{b}_1, \overline{b}_1]$. 
	Then,
	\begin{align} \label{eq:lower_bouned}
		\bm{\phi}[k]^\top \bm{W}[k] \bar{\bm{\phi}}[k] = \sum_{i=1}^n \phi_i [k] \bar{\phi}_i[k] w_i[k] \geq \underline{b}_1^2 w_i[k].
	\end{align}
	Thus, by using \eqref{eq:upper_bouned} and \eqref{eq:lower_bouned}, it follows that $\rho_k \in [1-\beta_k n\overline{b}_1^2, 1-\beta_k \underline{b}_1^2 w_i[k]]$.
	Define $\overline{\rho}_k = 1 - \epsilon w_i[k]$, then $\overline{\rho}_k$ equals to $1-\epsilon$ or $1$, each with probability $0.5$, and $\expect{\overline{\rho}_k} = 1- \frac{\epsilon}{2} \in (0, 1)$.
	Note that $0 < \epsilon < \frac{\underline{b}_1^2}{n \overline{b}_1^2}$ implies $\overline{\rho}_k > 0$.
	By Lemma \ref{lemma:rv_product}, 
	\begin{align}
		\lim_{k \to \infty} \prod_{i=1}^{k} \overline{\rho}_i = 0,~\text{a.s.}
	\end{align}
	When $\beta_k \in (\frac{\epsilon}{\underline{b}_1^2}, \frac{1}{n\overline{b}_1^2})$, $0 \leq \rho_k \leq \overline{\rho}_k$. Then, in an a.s. sense,
	\begin{align}
		\lim_{k \to \infty} |e[k]| = |e[0]| \lim_{k \to \infty} \prod_{i=1}^{k} \rho_i \leq |e[0]| \lim_{k \to \infty} \prod_{i=1}^{k} \overline{\rho}_i = 0.
	\end{align}
	Since $|e[k]| \geq 0$, $\lim_{k \to \infty} |e[k]| = 0$ a.s.
	In addition, by \eqref{thm1:2}, $\lim_{k \to \infty} \Delta \bm{u}[k] = \zeros_n$ a.s.
\end{proof}
\begin{remark}
	If $\mathcal{U}$ is sufficiently large and no DER hits the capacity limits, then $\bar{\bm{\phi}}[k] = \hat{\bm{\phi}}[k]$ and $\bm{\phi}[k]^\top \bm{W}[k] \bar{\bm{\phi}}[k] \geq \underline{b}_1^2 \sum_{i=1}^n w_i[k]$.
	Following a similar argument as in part (\textit{b}.2) in the proof of Theorem \ref{thm1}, we can show $e[k]$ converges to $0$ a.s. when $\beta_k \in (\frac{\epsilon}{n\underline{b}_1^2}, \frac{1}{n\overline{b}_1^2})$, where $0 < \epsilon < \frac{\underline{b}_1^2}{\overline{b}_1^2}$.
\end{remark}

Following a similar argument, Corollary \ref{corollary:1} can be proved as follows:
\begin{proof}
	When the control update rule in \eqref{eq:control0} is used instead of the one in \eqref{eq:control}, 
	\begin{align} 
		e[k] = (1 - \beta_k \bm{\phi}[k]^\top \bar{\bm{\phi}}[k]) e[k-1].
	\end{align}
	If $\bm{u}[k] \neq \underline{\bm{u}}$ and $\bm{u}[k] \neq \overline{\bm{u}}$, $\bm{\phi}[k]^\top \bar{\bm{\phi}}[k] = \phi_i [k] \bar{\phi}_i[k] \geq \underline{b}_1^2$.
	Define $\rho_k = 1 - \beta_k \bm{\phi}[k]^\top \bar{\bm{\phi}}[k]$.
	When $\beta_k \in (\frac{\epsilon}{\underline{b}_1^2}, \frac{1}{n\overline{b}_1^2})$, $\rho_k < 1-\epsilon$.
	Therefore, $\left|\frac{e[k]}{e[k-1]}\right| = \rho_k < 1- \epsilon$.
\end{proof}

\subsection{Proof of Theorem \ref{thm2}} \label{appendix:thm2}

The convergence analysis of the estimation step uses some convergence results for $\Delta \bm{\phi}[k]$, which are presented next.
\begin{lemma} \label{lemma:rv_sum_prod}
	Let $X_k$, $k=1, 2, \cdots$, be i.i.d. random variables that take value $1$ with probability $0.5$, or some constant $x \in (0, 1)$, also with probability $0.5$. Let $Y_k = \prod_{i=1}^k X_i$ and $Z = \sum_{i=1}^\infty Y_i$. Then, $Z$ is bounded a.s.
\end{lemma}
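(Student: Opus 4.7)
My plan is to reduce the statement to a Cauchy/root-test argument on a series of positive terms, where the geometric decay of $Y_k$ is supplied by the Strong Law of Large Numbers applied to $\log Y_k$. Concretely, note that each $X_i \in (0,1]$ so $0 \le Y_k \le 1$, and $Z$ is a non-decreasing sum of non-negative terms; showing $Z < \infty$ a.s.\ will therefore give the claim.

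First I would take logarithms: $\log Y_k = \sum_{i=1}^k \log X_i$, where $\log X_i$ is i.i.d., taking value $0$ with probability $0.5$ and $\log x < 0$ with probability $0.5$. In particular, $\mu := \expect{\log X_1} = \tfrac{1}{2}\log x < 0$, and the expectation is finite since $|\log X_i| \le |\log x|$. By the Strong Law of Large Numbers (cf.\ Proposition 2.15 in \cite{hajek2015random}, as used in the proof of Lemma~\ref{lemma:rv_product}),
\begin{align*}
	\frac{1}{k}\log Y_k \;=\; \frac{1}{k}\sum_{i=1}^k \log X_i \;\xrightarrow[k\to\infty]{\mathrm{a.s.}}\; \mu \;<\; 0.
\end{align*}

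Next I would apply the root test. Exponentiating gives $Y_k^{1/k} \to e^{\mu} \in (0,1)$ almost surely. Fix any $r$ with $e^{\mu} < r < 1$; then on the event of full probability on which the SLLN limit holds, there is a (random) index $K$ with $Y_k^{1/k} \le r$, equivalently $Y_k \le r^k$, for all $k \ge K$. Consequently
\begin{align*}
	Z \;=\; \sum_{i=1}^{K-1} Y_i + \sum_{i=K}^{\infty} Y_i \;\le\; (K-1) + \sum_{i=K}^{\infty} r^i \;<\; \infty,
\end{align*}
where the first bound uses $Y_i \le 1$ and the second is a convergent geometric series. This shows $Z$ is finite, hence bounded, almost surely.

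The whole argument is essentially routine once the logarithm is taken; the only step that needs some care is justifying $\expect{\log X_1} < 0$ and the applicability of the SLLN (both are immediate here because $\log X_1$ is bounded), and then choosing the comparison rate $r$ strictly between $e^\mu$ and $1$ so that the tail sum is dominated by a convergent geometric series. I do not anticipate a serious obstacle; the same template (SLLN on $\log$-increments plus root test) is exactly what underlies Lemma~\ref{lemma:rv_product} in the paper, and here it yields summability rather than just convergence to zero.
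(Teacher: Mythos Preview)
Your proof is correct. It differs from the paper's approach: the paper defines $K$ as the maximum length of a run of consecutive $1$'s in $\{X_k\}$, bounds $Z \le (K+1)/(1-x)$ by grouping the terms of $Z$ according to the distinct values taken by $Y_k$, and then argues that $K < \infty$ a.s. You instead apply the SLLN to $\log Y_k$ to obtain $Y_k^{1/k} \to e^{\mu} \in (0,1)$ a.s., and finish with the root test.

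Your route is both more standard and more robust. In fact, the paper's step asserting $K < \infty$ a.s.\ is problematic: for an i.i.d.\ fair-coin sequence, the second Borel--Cantelli lemma implies that runs of $1$'s of length $n$ occur for every fixed $n$ (indeed infinitely often), so the \emph{supremum} of run lengths is almost surely infinite, and the bound $(K+1)/(1-x)$ is vacuous as written. Your SLLN/root-test argument sidesteps this issue entirely and, as a bonus, yields an explicit geometric tail estimate $Y_k \le r^k$ for all large $k$, which is exactly the kind of control needed in the subsequent use of this lemma.
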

\begin{proof}
	Let $K$ denote the maximum number of $1$'s that appears continuously in the sequence $\{X_k\}$; then, the sequence $\{Y_k\}$ will have a new (smaller) value at most after $K+1$ steps.
	We claim $Z$ is unbounded only if $K$ is infinite. 
	Suppose $X_j = x$, and $X_k = 1$ for $k = j+1, \cdots, j+m$, then $Y_j = Y_{j+1} = \dots = Y_{j+m}$ and $\sum_{i=j}^{j+m} Y_j = (m+1) Y_j \leq (K+1) Y_j$.
	Therefore,
	\begin{align} \label{eq:l4_1}
		Z = \sum_{i=1}^\infty Y_i \leq (K+1) \sum_{i=0}^\infty x^i = \frac{K+1}{1-x}.
	\end{align}
	It follows from \eqref{eq:l4_1} that $Z$ is unbounded only if $K$ is infinite.
	However, $\prob{M = \infty} \leq \prob{X_{i+1} = \cdots = X_{i+K} = 1,~\text{for some}~i} = \frac{1}{2^\infty} = 0$, where $\bbP$ denotes probability.
	Thus, $Z$ is bounded a.s.
\end{proof}

\begin{lemma} \label{lemma:phi_converge}
	Using estimation update rule \eqref{eq:estimation} and control update rule \eqref{eq:control}, with $\beta_k \in (\frac{\epsilon}{n \underline{b}_1^2}, \frac{1}{n \overline{b}_1^2})$, where $0 < \epsilon < \frac{\underline{b}_1^2}{\overline{b}_1^2}$ is a given parameter, then
		\begin{align} \label{thm2:converge_1}
			\lim_{k \to \infty} \norm{\Delta \bm{\phi}[k]} = 0, ~\text{a.s.}
		\end{align}
		and 
		\begin{align}
			\sum_{k=1}^\infty \norm{\Delta \bm{\phi}[k]} < \infty, \text{a.s.}
		\end{align}
\end{lemma}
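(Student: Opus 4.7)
The plan is to propagate almost sure decay bounds from the tracking error $e[k]$ to the control increments $\Delta \bm{u}[k]$ via the explicit update formula, and then to $\Delta \bm{\phi}[k]$ via the Lipschitz hypothesis in Assumption~\ref{a2}. Since summability implies convergence to $0$, I would prove $\sum_k \norm{\Delta \bm{\phi}[k]} < \infty$ a.s.\ first and deduce the vanishing claim as a corollary. The analysis will reuse the product representation of $|e[k]|$ developed in the proof of Theorem~\ref{thm1} and the auxiliary i.i.d.\ bound in Lemma~\ref{lemma:rv_sum_prod}.

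First, I would reduce $\norm{\Delta \bm{\phi}[k]}$ to a bound in terms of $\norm{\Delta \bm{u}[k]}$. Because $\bm{\phi}[k]$ is evaluated at $\tilde{\bm{u}}[k] = a_k \bm{u}[k] + (1-a_k) \bm{u}[k-1]$ for some $a_k \in [0,1]$, a direct computation gives $\tilde{\bm{u}}[k] - \tilde{\bm{u}}[k-1] = a_k \Delta \bm{u}[k] + (1 - a_{k-1}) \Delta \bm{u}[k-1]$, and the Lipschitz property of $\partial h/\partial \bm{p}^g$ yields
\[
    \norm{\Delta \bm{\phi}[k]} \le b_2 \bigl( \norm{\Delta \bm{u}[k]} + \norm{\Delta \bm{u}[k-1]} \bigr).
\]
So summability of $\sum_k \norm{\Delta \bm{u}[k]}$ suffices for summability of $\sum_k \norm{\Delta \bm{\phi}[k]}$.

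Second, Lemma~\ref{lemma:remove_proj} rewrites the control update as $\Delta \bm{u}[k] = -\beta_k e[k-1] \bm{W}[k] \bar{\bm{\phi}}[k]$ with $\zeros_n \le \bar{\bm{\phi}}[k] \le \hat{\bm{\phi}}[k]$ componentwise. Since $\hat{\phi}_i[k] \le \overline{b}_1$, $w_i[k] \in \{0,1\}$, and $\beta_k < 1/(n\overline{b}_1^2)$, I get $\norm{\Delta \bm{u}[k]} \le \beta_k \sqrt{n} \overline{b}_1 |e[k-1]| \le |e[k-1]| / (\sqrt{n} \overline{b}_1)$. The task thus reduces to showing $\sum_k |e[k]| < \infty$ a.s. From the derivation leading to \eqref{thm1:error2}, $|e[k]| = |e[0]| \prod_{i=1}^k \rho_i$ with $\rho_i = 1 - \beta_i \bm{\phi}[i]^\top \bm{W}[i] \bar{\bm{\phi}}[i] \in [0,1]$ (positivity comes from the upper bound on $\beta_k$ together with \eqref{eq:upper_bouned}). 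Under the interior condition $\bm{u}[k] \in (\underline{\bm{u}}, \overline{\bm{u}})$ inherited from Theorem~\ref{thm2}, Lemma~\ref{lemma:remove_proj} guarantees (at coordinate $i=1$, say) that $\bar{\phi}_1[k] = \hat{\phi}_1[k] \ge \underline{b}_1$, giving $\bm{\phi}[k]^\top \bm{W}[k] \bar{\bm{\phi}}[k] \ge \underline{b}_1^2 w_1[k]$. Combined with $\beta_k > \epsilon/(n \underline{b}_1^2)$, this produces the i.i.d.\ envelope $\rho_k \le X_k := 1 - (\epsilon/n) w_1[k]$. The $X_k$ are i.i.d.\ taking values $1$ and $1 - \epsilon/n \in (0,1)$ each with probability $1/2$ (note $\epsilon < \underline{b}_1^2/\overline{b}_1^2 < 1$ so $1 - \epsilon/n \in (0,1)$). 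Lemma~\ref{lemma:rv_sum_prod} then yields $\sum_k \prod_{i=1}^k X_i < \infty$ a.s., and monotonicity ($0 \le \rho_i \le X_i$) gives $\sum_k |e[k]| \le |e[0]| \sum_k \prod_{i=1}^k X_i < \infty$ a.s. Chaining back through the bounds above gives $\sum_k \norm{\Delta \bm{\phi}[k]} < \infty$ a.s., and the a.s.\ limit claim follows since the summand of a convergent series vanishes.

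The main obstacle is the third step: constructing a clean i.i.d.\ envelope for $\rho_k$ that matches the hypothesis of Lemma~\ref{lemma:rv_sum_prod}. The subtlety is that $\bar{\bm{\phi}}[k]$ is itself a random function of $\bm{W}[k]$ through the projection, so the quadratic form $\bm{\phi}[k]^\top \bm{W}[k] \bar{\bm{\phi}}[k]$ is not obviously dominated by an i.i.d.\ sequence taking only two values. The key trick is to isolate a single coordinate whose contribution can be cleanly lower-bounded by $\underline{b}_1^2 w_1[k]$ using the interior hypothesis and Lemma~\ref{lemma:remove_proj}, discarding the remaining (non-negative) terms. Once this envelope is secured, the rest is mechanical: summability of $|e[k]|$ cascades through $\norm{\Delta \bm{u}[k]}$ to $\norm{\Delta \bm{\phi}[k]}$.
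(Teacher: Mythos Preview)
Your proposal is correct and follows the same structure as the paper's proof: bound $\norm{\Delta\bm{\phi}[k]}$ by $b_2(\norm{\Delta\bm{u}[k]}+\norm{\Delta\bm{u}[k-1]})$ via Assumption~\ref{a2}, control $\norm{\Delta\bm{u}[k]}$ by a constant times $|e[k-1]|$, and then show $\sum_k |e[k]|<\infty$ a.s.\ by dominating $\rho_k$ with an i.i.d.\ two-valued envelope and invoking Lemma~\ref{lemma:rv_sum_prod}. Your convex-combination identity for $\tilde{\bm u}[k]-\tilde{\bm u}[k-1]$ (which bypasses the paper's appeal to sign-definiteness of $\Delta\bm{u}$) and your explicit use of the strict-interior hypothesis from Theorem~\ref{thm2} to fix the coordinate $i=1$ and obtain a genuinely i.i.d.\ envelope $X_k=1-(\epsilon/n)w_1[k]$ are both slightly cleaner than the paper's corresponding steps, but the argument is otherwise the same.
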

\begin{proof} If follows from the proof of Theorem \ref{thm1} that the entries of $\Delta \bm{u}[k]$ always have the same sign. 
	First consider the case where $\Delta \bm{u}[k] \geq \zeros_n$ for all $k \in \bbN$.
	Note that $\bm{\phi}[k]^\top = \left.\dfrac{\partial h}{\partial \bm{u}}\right\vert_{\bm{\tilde{u}}[k]}$, where $\bm{\tilde{u}}[k] = a_k \bm{u}[k] + (1-a_k) \bm{u}[k-1]$ with $a_k \in [0, 1]$, i.e., $\bm{u}[k-1] \leq \bm{\tilde{u}}[k] \leq \bm{u}[k]$.
	Similarly, $\bm{\phi}[k-1]^\top = \left.\dfrac{\partial h}{\partial \bm{u}}\right\vert_{\bm{\tilde{u}}[k-1]}$, where $\bm{u}[k-2] \leq \bm{\tilde{u}}[k-1] \leq \bm{u}[k-1]$.
	Thus, by Assumption \ref{a3}, we have that
	\begin{align}
		\norm{\Delta \bm{\phi}[k]} & \leq b_2 \norm{\bm{\tilde{u}}[k] - \bm{\tilde{u}}[k-1]} \nonumber \\
		& \leq b_2 \norm{\bm{u}[k] - \bm{u}[k-2]} \nonumber \\
		& = b_2 \norm{\Delta \bm{u}[k] + \Delta \bm{u}[k-1]} \nonumber \\
		& \leq b_2 (\norm{\Delta \bm{u}[k]} + \norm{\Delta \bm{u}[k-1]}).
	\end{align}
	Since $\lim_{k \to \infty} \norm{\Delta \bm{u}[k]} = 0$ a.s. by Theorem \ref{thm1}, as a result, $\lim_{k \to \infty} (\norm{\Delta \bm{u}[k]} + \norm{\Delta \bm{u}[k-1]}) = 0$ a.s., which gives
	\begin{align} 
		\lim_{k \to \infty} \norm{\Delta \bm{\phi}[k]} = 0, ~\text{a.s.}
	\end{align}
	Assume $\bm{u}[k]=\zeros_n$ for all $k<0$, then we have that
	\begin{align}
		\sum_{k=1}^\infty \norm{\Delta \bm{\phi}[k]} & \leq \sum_{k=1}^\infty b_2 (\norm{\Delta \bm{u}[k]} + \norm{\Delta \bm{u}[k-1]}) \nonumber \\
		& \leq 2 b_2 \sum_{k=0}^\infty \norm{\Delta \bm{u}[k]} \nonumber \\
		& \leq 2 b_2 \sum_{k=0}^\infty \norm{\beta_k \bm{W}[k] \hat{\bm{\phi}}[k] e[k-1]} \nonumber \\
		& \leq \frac{2 b_2}{n \overline{b}_1^2} \sqrt{n} \overline{b}_1 \sum_{k=0}^\infty |e[k-1]| \nonumber \\
		& = \frac{2 b_2}{\sqrt{n} \overline{b}_1} \sum_{k=-1}^\infty |e[k]|.
	\end{align}
	Recall that $\overline{\rho}_k$ equals to $1-\epsilon$ or $1$, each with probability $0.5$, where $\overline{\rho}_k$ is defined in the proof of Theorem \ref{thm1}.
	Therefore, by Lemma \ref{lemma:rv_sum_prod}, $\sum_{k=1}^\infty \prod_{i=1}^k \overline{\rho}_i$ is bounded  a.s.
	When $\beta_k \in (\frac{\epsilon}{\underline{b}_1^2}, \frac{1}{n\overline{b}_1^2})$, $0 \leq \rho_k \leq \overline{\rho}_k$, and 
	\begin{align}
		\sum_{k=0}^\infty |e[k]| = |e[0]| (1+\sum_{k=1}^\infty\prod_{i=1}^{k} \rho_i) \leq |e[0]| (1 + \sum_{k=1}^\infty \prod_{i=1}^{k} \overline{\rho}_i).
	\end{align}
	As a result, $\sum_{k=-1}^\infty |e[k]|$ is bounded a.s. since $|e[-1]|$ is also bounded.
	The case where $\Delta \bm{u}[k] \leq \zeros_n$ for all $k \in \bbN$ can be proved similarly.
\end{proof}

The convergence analysis of the estimation step also relies on the following lemma (see Theorem 1 in \cite{robbins1985convergence}).
\begin{lemma} \label{lemma:converge}
	Let $X_k, Y_k, Z_k$, $k=1, 2, \cdots$, be non-negative variables in $\bbR$ such that $\sum_{k=0}^\infty Y_k < \infty$, and $X_{k+1} \leq X_k + Y_k - Z_k$, then $X_k$ converges and $\sum_{k=0}^\infty Z_k < \infty$.
\end{lemma}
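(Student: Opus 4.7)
My plan is to prove the lemma by two nearly independent arguments: one for the summability of $Z_k$ via telescoping, and one for the convergence of $X_k$ via a monotone ``shifted'' sequence trick.

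First, I would rearrange the hypothesis as $Z_k \leq X_k - X_{k+1} + Y_k$ and sum from $k=0$ to $k=N$. Telescoping gives
\begin{align*}
\sum_{k=0}^N Z_k \;\leq\; X_0 - X_{N+1} + \sum_{k=0}^N Y_k \;\leq\; X_0 + \sum_{k=0}^N Y_k,
\end{align*}
where the last inequality uses $X_{N+1} \geq 0$. Letting $N \to \infty$ and invoking $\sum_{k=0}^\infty Y_k < \infty$ yields $\sum_{k=0}^\infty Z_k < \infty$ immediately.

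For the convergence of $X_k$, the key idea is to absorb the ``bad'' additive term $Y_k$ into a new auxiliary sequence that becomes monotone. Define $S_k = \sum_{i=k}^\infty Y_i$, which is well-defined and non-negative because $Y_i \geq 0$ and the series converges; note also that $S_k \to 0$ as $k \to \infty$ since it is the tail of a convergent non-negative series. Set $\tilde{X}_k = X_k + S_k$. Then, using the hypothesis and $S_k - S_{k+1} = Y_k$, I obtain
\begin{align*}
\tilde{X}_{k+1} \;=\; X_{k+1} + S_{k+1} \;\leq\; X_k + Y_k - Z_k + S_{k+1} \;=\; \tilde{X}_k - Z_k \;\leq\; \tilde{X}_k,
\end{align*}
since $Z_k \geq 0$. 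Thus $\tilde{X}_k$ is non-increasing, and it is bounded below by $0$ because $X_k, S_k \geq 0$. Hence $\tilde{X}_k$ converges to some non-negative limit, and since $S_k \to 0$, the original sequence $X_k = \tilde{X}_k - S_k$ also converges to the same limit.

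There is no real obstacle here; the lemma is a classical deterministic convergence result (the deterministic skeleton underlying the Robbins--Siegmund almost-supermartingale lemma). The only subtlety is recognizing that one must define the shifted sequence $\tilde{X}_k = X_k + S_k$ rather than try to show $X_k$ itself is monotone, which it need not be. Once this shift is in place, the argument reduces to the monotone convergence of $\tilde{X}_k$ plus the trivial fact that the tail $S_k$ of a summable non-negative series tends to zero.
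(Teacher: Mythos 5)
Your proof is correct. Both halves work: the telescoping bound $\sum_{k=0}^N Z_k \leq X_0 - X_{N+1} + \sum_{k=0}^N Y_k \leq X_0 + \sum_{k=0}^\infty Y_k$ gives summability of $Z_k$, and the shifted sequence $\tilde{X}_k = X_k + \sum_{i=k}^\infty Y_i$ is indeed non-increasing and bounded below, so it converges, and since the tail $S_k$ of a convergent non-negative series vanishes, $X_k = \tilde{X}_k - S_k$ converges as well. The one thing to note in comparison with the paper is that the paper does not prove this lemma at all: it simply quotes it as Theorem~1 of Robbins and Siegmund's convergence theorem for non-negative almost supermartingales. Your argument is the standard deterministic skeleton of that result (no conditional expectations, no $\sigma$-algebras), which is exactly the form the paper actually uses in the proof of Theorem~2, so a self-contained elementary proof like yours is arguably preferable here --- it avoids invoking a martingale theorem for what is a purely deterministic statement. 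A cosmetic remark only: the lemma as stated indexes the sequences from $k=1$ but sums from $k=0$; you silently follow the paper's convention and use $X_0$, $Y_0$, which is fine but worth flagging if you were editing the statement itself.
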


Using Lemma \ref{lemma:phi_converge} and Lemma \ref{lemma:converge}, Theorem \ref{thm2} can then be proved as follows:
\begin{proof} Consider an arbitrary sample path. 
	Without loss of generality, assume $e[k] < 0$, it follows from Theorem \ref{thm1} that $e[k] < 0, \forall k \in \bbN$.
	Since $\bm{u}[k] \in (\underline{\bm{u}}, \overline{\bm{u}}), \forall k \in \bbN$, \eqref{eq:control} becomes 
	\begin{align}
		\Delta \bm{u}[k] = - \beta_k e[k-1] \bm{W}[k] \hat{\bm{\phi}}[k].
	\end{align}
	It follows from \eqref{eq:io_model} and \eqref{eq:estimation} that	
	\begin{align} \label{thm2:update}
		\hat{\bm{\phi}}[k+1] = \proj_{\calB} ( \hat{\bm{\phi}}[k] - \alpha_{k+1} \Delta \bm{u}[k] \Delta \bm{u}[k]^\top \bm{\varepsilon}[k] ).
	\end{align}
	By definition, the estimation error at iteration $k$ is 
	\begin{align} \label{thm2:error}
		\bm{\varepsilon}[k+1] = \proj_{\calB} ( \hat{\bm{\phi}}[k] - \alpha_{k+1} \Delta \bm{u}[k] \Delta \bm{u}[k]^\top \bm{\varepsilon}[k] ) - \bm{\phi}[k+1].
	\end{align}
	Since $\bm{\phi}[k+1] = \proj_{\calB} (\bm{\phi}[k+1])$, by the non-expansiveness of the projection operation (see Proposition 1.1.9 in \cite{bertsekas2009convex}), then
	\begin{align} \label{thm2:norm_ineq}
		\norm{\bm{\varepsilon}[k+1]} & \leq \norm{ \bm{\varepsilon}[k] - \alpha_{k+1} \Delta \bm{u}[k] \Delta \bm{u}[k]^\top \bm{\varepsilon}[k] - \Delta \bm{\phi}[k+1]} \nonumber \\
		& \leq \norm{ \bm{\varepsilon}[k] - \alpha_{k+1} \Delta \bm{u}[k] \Delta \bm{u}[k]^\top \bm{\varepsilon}[k]} + \norm{\Delta \bm{\phi}[k+1]}.
	\end{align}
	Let $g(\alpha_{k+1}) = \norm{ \bm{\varepsilon}[k] - \alpha_{k+1} \Delta \bm{u}[k] \Delta \bm{u}[k]^\top \bm{\varepsilon}[k] }^2$; then, $g$ attains its minimum at $\alpha_{k+1} = \frac{1}{\norm{\Delta \bm{u}[k]}^2}$, which is 
	\begin{align}
		\norm{\bm{\varepsilon}[k]}^2 - (\bm{\varepsilon}[k]^\top \frac{\Delta \bm{u}[k]}{\norm{\Delta \bm{u}[k]}} )^2 = \norm{\bm{\varepsilon}[k]}^2 -  (\bm{\varepsilon}[k]^\top \frac{\bm{W}[k] \hat{\bm{\phi}}[k]}{ \norm{\bm{W}[k] \hat{\bm{\phi}}[k]} } )^2.
	\end{align}
	Define $\cos \theta_k = \frac{\bm{\varepsilon}[k]^\top}{ \norm{\bm{\varepsilon}[k]}} \frac{\bm{W}[k] \hat{\bm{\phi}}[k]}{\norm{\bm{W}[k] \hat{\bm{\phi}}[k]}}$.
	Consequently, $g(\alpha_{k+1}) = (1-\sin^2 \theta_k) \norm{ \bm{\varepsilon}[k]}^2$, and
	\begin{align}
		\norm{\bm{\varepsilon}[k+1]} \leq |\sin \theta_k| \norm{\bm{\varepsilon}[k]} + \norm{\Delta \bm{\phi}[k+1]}.
	\end{align}
	
	Let $X_k = \norm{\bm{\varepsilon}[k]}$, $Y_k = \norm{\Delta \bm{\phi}[k+1]}$, and $Z_k = (1-|\sin \theta_k|) \norm{\bm{\varepsilon}[k]}$.
	Then, $X_{k+1} \leq X_k + Y_k - Z_k$.
	Also, $\sum_{k=0}^\infty Y_k = \sum_{k=1}^\infty \norm{\Delta \bm{\phi}[k]} < \infty$ by Lemma \ref{lemma:phi_converge}.
	Therefore, by Lemma \ref{lemma:converge}, $\norm{\bm{\varepsilon}[k]}$ converges, and $\sum_{k=1}^\infty (1-|\sin \theta_k|) \norm{\bm{\varepsilon}[k]} < \infty$, which further implies $\lim_{k \to \infty} (1-|\sin \theta_k|) \norm{\bm{\varepsilon}[k]} = 0$.
	Let $\varepsilon^\star$ denote the limit of $\norm{\bm{\varepsilon}[k]}$; then,
	\begin{align}
		\lim_{k \to \infty} |\sin \theta_k| \norm{\bm{\varepsilon}[k]} & = \lim_{k \to \infty} (|\sin \theta_k|-1) \norm{\bm{\varepsilon}[k]} + \lim_{k \to \infty} \norm{\bm{\varepsilon}[k]} \nonumber \\
		&= \varepsilon^\star. 
	\end{align}
	
	Next, we show $\varepsilon^\star = 0$ by contradiction. Assume $\varepsilon^\star > 0$. 
	Since both $\norm{\bm{\varepsilon}[k]}$ and $|\sin \theta_k| \norm{\bm{\varepsilon}[k]}$ converges to $\varepsilon^\star$,
	\begin{align}
		\lim_{k\to \infty} |\sin \theta_k| = \frac{\lim_{k\to \infty} |\sin \theta_k| \norm{\bm{\varepsilon}[k]}}{\lim_{k\to \infty} \norm{\bm{\varepsilon}[k]}} = 1,
	\end{align}
	which implies $|\cos \theta_k|$ converges to $0$.
	Since $\norm{\bm{\varepsilon}[k]}$ and $\norm{\bm{W}[k] \hat{\bm{\phi}}[k]}$ are bounded, then $|\bm{\varepsilon}[k]^\top \bm{W}[k] \hat{\bm{\phi}}[k]|$ converges to $0$.
	Define $\mathsf{E}_i[k] = \{w_j[k] = 1~\text{if } j=i, w_j[k] = 0~\text{otherwise}\}$; then $\prob{\mathsf{E}_i[k]} = \frac{1}{2^n}$. 
	Consequently, $\sum_{k=1}^\infty \prob{\mathsf{E}_i[k]} = \infty$.
	Also note that $\mathsf{E}_i[k]$, $k \in \bbN$, are independent.
	By the Borel-Cantelli Lemma (see Lemma 1.3 in \cite{hajek2015random}), $\prob{\mathsf{E}_i[k]~\text{infinitely often}} = 1$; therefore, there are infinitely many time instants that $w_i[k] = 1$ and $w_j[k] = 0$ for all $j \neq i$.
	Let $\mathcal{K}_i$ denote the set of such time instants.
	Then $|\bm{\varepsilon}[k]^\top \bm{W}[k] \hat{\bm{\phi}}[k]| = |\varepsilon_i[k] \hat{\phi}_i[k]|$ for $k \in \mathcal{K}_i$. 
	The sequence $\{|\varepsilon_i[k] \hat{\phi}_i[k]|, k \in \mathcal{K}_i\}$ is a subsequence of $\{|\bm{\varepsilon}[k]^\top \bm{W}[k] \hat{\bm{\phi}}[k]|\}$; therefore, it also converges to $0$.
	Note that $\hat{\bm{\phi}}[k] > 0$; thus, $\varepsilon_i[k]$ converges to $0$.
	Since $i$ is arbitrary, we conclude that $\norm{\bm{\varepsilon}}[k]$ converges to $0$, which implies $\varepsilon^\star = 0$, contradiction.
	Since this result holds for all sample paths, then we conclude that $\norm{\bm{\varepsilon}[k]}$ converges to $0$ a.s.
\end{proof}

\bibliographystyle{IEEEtran}
\bibliography{ARC}



\vspace{0.5in}
\begin{IEEEbiographynophoto}{Hanchen Xu}
(S'12) received the B.Eng. and M.S. degrees in electrical engineering from Tsinghua University, Beijing, China, in 2012 and 2014, respectively, and the M.S. degree in applied mathematics from the University of Illinois at Urbana-Champaign, Urbana, IL, USA, in 2017, where he is currently working toward the Ph.D. degree at the Department of Electrical and Computer Engineering. His current research interests include control, optimization, reinforcement learning, with applications to power systems and electricity markets.
\end{IEEEbiographynophoto}

\begin{IEEEbiographynophoto}{Alejandro~D.~Dom\'{i}nguez-Garc\'{i}a}
(S'02, M'07) received the degree of electrical engineering from the University of Oviedo (Spain) in 2001 and the Ph.D. degree in electrical engineering and computer science from the Massachusetts Institute of Technology, Cambridge, MA, in 2007. 

He is Professor with the Department of Electrical and Computer Engineering (ECE), and Research Professor with the Coordinated Science Laboratory and the Information Trust Institute, all at the University of Illinois at Urbana-Champaign. He is affiliated with the ECE Power and Energy Systems area, and has been a Grainger Associate since August 2011.

His research interests are in the areas of system reliability theory and control, and their applications to electric power systems, power electronics, and embedded electronic systems for safety-critical/fault-tolerant aircraft, aerospace, and automotive applications.

Dr. Dom\'{i}nguez-Garc\'{i}a received the NSF CAREER Award in 2010, and the Young Engineer Award from the IEEE Power and Energy Society in 2012. In 2014, he was invited by the National Academy of Engineering to attend the US Frontiers of Engineering Symposium, and was selected by the University of Illinois at Urbana-Champaign Provost to receive a Distinguished Promotion Award. In 2015, he received the U of I College of Engineering Dean's Award for Excellence in Research.

He is currently an editor for the IEEE Transactions on Control of Network Systems; he also served as an editor of the IEEE Transactions on Power Systems and IEEE Power Engineering Letters from 2011 to 2017.
 
\end{IEEEbiographynophoto}

\begin{IEEEbiographynophoto}{Peter W. Sauer}
(S'73, M'77, SM'82, F'93, LF'12) obtained his Bachelor of Science degree in electrical engineering from the University of Missouri at Rolla in 1969. From 1969 to 1973, he was the electrical engineer on a design assistance team for the Tactical Air Command at Langley Air Force Base, Virginia. He obtained the Master of Science and Ph.D. degrees in Electrical Engineering from Purdue University in 1974 and 1977 respectively. From August 1991 to August 1992 he served as the Program Director for Power Systems in the Electrical and Communication Systems Division of the National Science Foundation in Washington D.C. He is a cofounder of the Power Systems Engineering Research Center (PSERC) and the PowerWorld Corporation. He is a registered Professional Engineer in Virginia and Illinois, a Fellow of the IEEE, and a member of the U.S. National Academy of Engineering. He is currently the Grainger Chair Professor of Electrical Engineering at Illinois. Additional information can be found at: https://ece.illinois.edu/directory/profile/psauer.
\end{IEEEbiographynophoto}

\end{document}